\newcommand{\ca}{\textsc{ca}}                 
\newcommand{\eg}{\textit{e.g.}}               
\newcommand{\ie}{\textit{i.e.}}               
\newcommand{\sft}{\textsc{sft}}               
\newcommand{\wrt}{with respect to }           
\newcommand{\Acal}{\ensuremath{\mathcal{A}}}  
\newcommand{\Forb}{\ensuremath{\mathcal{F}}}  
\newcommand{\Lang}{\ensuremath{\mathcal{L}}}  
\newcommand{\deux}{\ensuremath{\{0,1\}}}      
\newcommand{\Neigh}{\ensuremath{\mathcal{N}}} 
\newcommand{\PF}{\ensuremath{\mathcal{PF}}}   
\newcommand{\restrict}[2]{\ensuremath{        
    \left.{#1}\right|_{{#2}}
}}
\newcommand{\xor}{\ensuremath{\mathrm{xor}}}  
\newcommand{\GG}{\ensuremath{\mathbb{G}}}     
\newcommand{\gen}{\ensuremath{B}}             
\newcommand{\update}[1]{\ensuremath{
    \xrightarrow{#1}%
}}
\newcommand{\ifthenelsedef}[3]{ 
  \ensuremath{\left\{ \begin{array}{ll}
      {#2} & \mathrm{if} \; {#1} \\
      {#3} & \mathrm{otherwise}
    \end{array} \right.}
}
\newcommand{\Zset}{\integers}
\theoremstyle{plain}
\newtheorem{theorem}{Theorem}
\newtheorem{corollary}{Corollary}
\newtheorem{lemma}{Lemma}
\newtheorem{proposition}{Proposition}
\newtheorem{conjecture}{Conjecture}
\theoremstyle{definition}
\newtheorem{definition}{Definition}
\newtheorem{example}{Example}
\newcommand{\exampleqed}{\ensuremath{\ocircle}\par}
\title{Post-surjectivity and balancedness of cellular automata over groups}
\newcommand{\SC}{Silvio Capobianco}
\newcommand{\JK}{Jarkko Kari}
\newcommand{\ST}{Siamak Taati}
\newcommand{\TTU}{
  {Department of Software Science, School of Information Technology, Tallinn University of Technology. \newline \href{mailto:silvio@cs.ioc.ee}{\texttt{silvio@cs.ioc.ee}}, \href{mailto:silvio.capobianco@gmail.com} {\texttt{silvio.capobianco@gmail.com}}}
}
\newcommand{\UTU}{
  Department of Mathematics and Statistics, University of Turku.
  \href{mailto:jkari@utu.fi}{\texttt{jkari@utu.fi}}
}
\newcommand{\LEIDENUNIV}{
  Mathematical Institute, Leiden University.
  \href{mailto:siamak.taati@gmail.com}{\texttt{siamak.taati@gmail.com}}
}
\newcommand{\TNXSC}{ 
  This research was supported by
  the ERDF funded project Coinduction,
  the Estonian Ministry of Education and Research
  institutional research grant IUT33-13,
  and the Estonian Science Foundation grant no. 9398.
}
\newcommand{\TNXJK}{ 
  Research supported by the Academy of Finland grant 296018.
}
\newcommand{\TNXST}{ 
  The work of ST is supported by ERC Advanced Grant 267356-VARIS
  of Frank den Hollander.
}
\author{\SC \affiliationmark{1} \thanks{\TNXSC}
  \and \JK \affiliationmark{2} \thanks{\TNXJK}
  \and \ST \affiliationmark{3} \thanks{\TNXST}
}
\affiliation{
  \TTU \\
  \UTU \\
  \LEIDENUNIV
}
\keywords{
  cellular automata,
  reversibility,
  sofic groups.
}
\begin{document}

\maketitle

\begin{abstract}
  We discuss cellular automata over arbitrary finitely generated
  groups. We call a cellular automaton post-surjective if for any pair
  of asymptotic configurations, every pre-image of one is asymptotic
  to a pre-image of the other. The well known dual concept is
  pre-injectivity: a cellular automaton is pre-injective if distinct
  asymptotic configurations have distinct images. We prove that
  pre-injective, post-surjective cellular automata are reversible.
  Moreover, on sofic groups, post-surjectivity alone implies
  reversibility.  We also prove that reversible cellular automata over
  arbitrary groups are balanced, that is, they preserve the uniform
  measure on the configuration space.
\end{abstract}

\section{Introduction}

Cellular automata (briefly, \ca) are parallel synchronous systems on
regular grids where the next state of a point depends on the current
state of a finite neighborhood. The grid is determined by a finitely
generated group and can be visualized as the Cayley graph of the
group.  In addition to being a useful tool for simulations, \ca\ are
studied as models of massively parallel computers, and as dynamical
systems on symbolic spaces.  From a combinatorial point of view,
interesting questions arise as to how the properties of the global
transition function (obtained by synchronous application of the local
update rule at each point) are linked to one another.

One such relation is provided by Bartholdi's theorem~\citep{b10},
which links surjectivity of cellular automata to the preservation of
the product measure on the space of global configurations: the latter
implies the former, but is only implied by it if the grid is an
\emph{amenable} group. In the amenable setting, the \emph{Garden of
  Eden theorem} equates surjectivity with \emph{pre-injectivity}, that
is, the property that two asymptotic configurations (\ie, two
configurations differing on at most finitely many points) with the
same image must be equal. In the same setting, by \citep[Theorem
  4.7]{ff03}, the Garden of Eden theorem still holds for \ca\ on
subshifts that are of finite type and are strongly
irreducible. Counterexamples with general subshifts are known already
in dimension 1. In the general case, the preservation of the product
measure can be expressed combinatorially by the so-called
\emph{balancedness} property. Furthermore, bijectivity is always
equivalent to reversibility, that is, the existence of an inverse that
is itself a \ca.

A parallel to pre-injectivity is \emph{post-surjectivity}, which is
described as follows: given a configuration $e$ and its image $c$,
every configuration $c'$ asymptotic to $c$ has a pre-image $e'$
asymptotic to $e$.  While pre-injectivity is \emph{weaker} than
injectivity, post-surjectivity turns out to be \emph{stronger} than
surjectivity. It is natural to ask whether such trade-off between
injectivity and surjectivity preserves bijectivity.

In this paper, which expands the work presented at the conferences
Automata 2015 and Automata 2016, we discuss the two properties of
balancedness and post-surjectivity, and their links with
reversibility. First, we prove that post-surjectivity and
pre-injectivity together imply reversibility; that is, the trade-off
above actually holds over all groups. Next, we show that, in a context
so broad that no counterexamples are currently known (\ie, when the
grid is a sofic group), post-surjectivity actually implies
pre-injectivity. Finally, we prove that a reversible cellular
automaton over any group is balanced, hence giving an ``almost
positive'' answer to a conjecture proposed in~\citep{cgk13}.

\section{Background}

Given a set $X$, we indicate by $\PF(X)$ the collection of all finite
subsets of $X$. If $X$ is finite, we indicate by $|X|$ the number of
its elements.

Let $\GG$ be a group and let $U,V\subseteq\GG$. We put
\begin{math}
  UV = \{ x \cdot y \mid x \in U, y \in V \} ,
\end{math}
and
\begin{math}
  U^{-1} = \{ x^{-1} \mid x \in U \} .
\end{math}
If $U=\{g\}$ we write $gV$ for $\{g\}V$.

A \emph{labeled graph} is a triple
\begin{math}
  (V, L, E)
\end{math}
where $V$ is a set of \emph{vertices}, $L$ is a set of \emph{labels}, and
\begin{math}
  E \subseteq V \times L \times V
\end{math}
is a set of \emph{labeled edges}.
A \emph{labeled graph isomorphism} from
\begin{math}
  (V_1, L, E_1)
\end{math}
to
\begin{math}
  (V_2, L, E_2)
\end{math}
is a bijection
\begin{math}
  \phi : V_1 \to V_2
\end{math}
such that, for every $x,y\in{V_1}$ and $\ell\in{L}$,
\begin{math}
  (x, \ell, y) \in E_1
\end{math}
if and only if
\begin{math}
  (\phi(x), \ell, \phi(y)) \in E_2 .
\end{math}
We may say that $(V,E)$ is an $L$-labeled graph to mean that $(V,L,E)$
is a labeled graph.

A subset $\gen$ of $\GG$ is a \emph{set of generators} for $\GG$ if
every $g\in\GG$ can be written as
\begin{math}
  g = x_0 \cdots x_{n-1}
\end{math}
for suitable $n\geq0$ and
\begin{math}
  x_0, \ldots, x_{n-1} \in \gen \cup \gen^{-1} .
\end{math}
The group $\GG$ is \emph{finitely generated} (briefly, f.g.) if $\gen$
can be chosen finite.

Let $\gen$ be a finite set of generators for the group $\GG$. The
\emph{Cayley graph} of~$\GG$ with respect to $\gen$ is the labeled graph
$(\GG,L,E)$ where
\begin{math}
  L = (\gen \cup \gen^{-1})
\end{math}
and
\begin{math}
  E = \{ (g, x, h) \mid gx = h \} .
\end{math}
The \emph{length} of $g\in\GG$ \wrt $\gen$ is the \emph{minimum}
length
\begin{math}
  n = \|g\|_\gen
\end{math}
of a representation
\begin{math}
  g = x_0 \cdots x_{n-1} .
\end{math}
The \emph{distance} between $g$ and $h$ with respect to $\gen$ is
\begin{math}
  d_\gen(g, h) = \|g^{-1} \cdot h\|_\gen ,
\end{math} 
\ie, the length of the shortest path from $g$ to $h$ in the Cayley
graph of $\GG$ \wrt $\gen$. With respect to such distance,
multiplications to the left by a fixed element, \ie, the functions
$x\mapsto{gx}$ where $g\in\GG$ is fixed, are isometries.
The \emph{disk} of center $g$ and radius $r$ \wrt $\gen$ is the set
$D_{\gen,r}(g)$ of those $h\in\GG$ such that
\begin{math}
 d_\gen(g, h) \leq r .
\end{math}
We omit $g$ if it is the identity element $1_\GG$ of $\GG$ and write
$D_{\gen, r}$ for $D_{\gen,r}(1_\GG)$.
The distance between two subsets $U,V$ of $\GG$ with respect to $\gen$ is
\begin{math}
  d_\gen(U, V) = \inf \{ d_\gen(u, v) \mid u \in U, v \in V \} .
\end{math}
We omit $\gen$ if irrelevant or clear from the context.

A group $\GG$ is \emph{amenable} if for every
\begin{math}
  K \in \PF(\GG)
\end{math}
and every $\varepsilon>0$ there exists
\begin{math}
  F \in \PF(\GG)
\end{math}
such that
\begin{math}
  |F \cap kF| > (1 - \varepsilon) |F|
\end{math}
for every $k\in{K}$. The groups $\Zset^d$ are amenable, whereas the
\emph{free groups} on two or more generators are not. For an
introduction to amenability see, \eg, \citep[Chapter 4]{csc10}.

Let $S$ be a finite set and let $\GG$ be a group. The elements of the
set $S^\GG$ are called \emph{configurations}. The space $S^\GG$ is
given the \emph{prodiscrete topology} by considering $S$ as a discrete
set. This makes $S^\GG$ a compact space by Tychonoff's theorem. In the
prodiscrete topology, two configurations are ``near'' if they coincide
on a ``large'' finite subset of $\GG$. If $\GG$ is countable, then the
prodiscrete topology is metrizable: indeed, if
\begin{math}
  \GG = \{ g_n \}_{n \geq 0}
\end{math},
then  
\begin{math}
  d(c, e) = 2^{-n}
\end{math}
for all $c,e\in{S^\GG}$, where $n\geq0$ is the smallest index such that
\begin{math}
  c(g_n) \neq e(g_n)
\end{math},
is a distance that induces the product topology. If, in addition,
$\gen$ is a finite set of generators for $\GG$, then setting
\begin{math}
  d_\gen(c, e) = 2^{-n} ,
\end{math}
for all $c,e\in{S^\GG}$, where $n$ is the smallest non-negative
integer such that $c$ and $e$ differ on $D_{\gen,n}$, also defines a
distance that induces the prodiscrete topology. Given
\begin{math}
  c, c' \in S^\GG ,
\end{math}
we call
\begin{math}
  \Delta(c, c') = \{ g \in \GG \mid c(g) \neq c'(g) \}
\end{math}
the \emph{difference set} of $c$ and $c'$. Two configurations are
\emph{asymptotic} if they differ at most on finitely many points of
$\GG$.  A \emph{pattern} is a function
\begin{math}
  p : E \to S
\end{math}
where $E$ is a finite subset of $\GG$.

For $g\in\GG$, the \emph{translation} by $g$ is the function
\begin{math}
  \sigma_g : S^\GG \to S^\GG
\end{math}
that sends an arbitrary configuration $c$ into the configuration
$\sigma_g(c)$ defined by
\begin{equation}
  \label{eq:transl}
  \sigma_g(c)(x) = c(g \cdot x)
  \;\; \forall x \in \GG
  \,.
\end{equation}
A \emph{shift subspace} (briefly, \emph{subshift}) is a subset $X$ of
$S^\GG$ which is closed (equivalently, compact) and invariant by all
translations $\sigma_g$ with $g\in\GG$.  The set $S^\GG$ itself is referred to as the
\emph{full shift}.
It is well known (see e.g.~\citep{lm95}) that every subshift $X$ is
determined by a set of \emph{forbidden patterns} $\Forb$ in the sense
that the elements of the subshift $X$ are precisely those
configurations in which translations of patterns in $\Forb$ do not
occur.  If $\Forb$ can be chosen finite, $X$ is called a \emph{shift
  of finite type} (briefly, \sft).
A pattern $p:E\to{S}$ is said to be \emph{admissible} for $X$ if there
exists a configuration $c\in{X}$ such that
\begin{math}
  \restrict{c}{E} = p
\end{math}.
The set of patterns that are admissible for $X$ is called the
\emph{language} of $X$, indicated as $\Lang_X$.

A \emph{cellular automaton} (briefly, \ca) on a group $\GG$ is a
triple
\begin{math}
  \Acal = \langle S, \Neigh, f \rangle
\end{math}
where the \emph{set of states} $S$ is finite and has at least two
elements, the \emph{neighborhood} $\Neigh$ is a finite subset of
$\GG$, and the \emph{local update rule} is a function that associates
to every pattern
\begin{math}
  p : \Neigh \to S
\end{math}
a state $f(p)\in S$. The \emph{global transition function} of $\Acal$
is the function
\begin{math}
  F_\Acal : S^\GG \to S^\GG
\end{math}
defined by
\begin{equation} \label{eq:gtf}
  F_\Acal(c) (g)
  =
  f \left( \restrict{(\sigma_g(c))}{\Neigh} \right)
  \;\;
  \forall g \in \GG
  \,:
\end{equation}
that is, if
\begin{math}
  \Neigh = \{ n_1, \ldots, n_m \} ,
\end{math}
then
\begin{math}
  F_\Acal(c)(g) = f \left( c(g \cdot n_1), \ldots, c(g \cdot n_m) \right) .
\end{math}
Observe that (\ref{eq:gtf}) is continuous in the prodiscrete topology
and commutes with the translations, \ie,
\begin{math}
  F_\Acal \circ \sigma_g = \sigma_g \circ F_\Acal
\end{math}
for every $g \in \GG$. The \emph{Curtis-Hedlund-Lyndon theorem} states
that the continuous and translation-commuting functions from $S^\GG$
to itself are precisely the \ca\ global transition functions.

We shall use the following notation to represent the application of
the local rule on patterns.  If
\begin{math}
  p : E \to S
\end{math}
and
\begin{math}
  q : C \to S
\end{math}
are two patterns, we write
\begin{math}
  p \update{f} q
\end{math}
to indicate that
\begin{math}
  C\Neigh \subseteq E
\end{math}
and
\begin{math}
  q(g) = f \left( \restrict{(\sigma_g(p))}{\Neigh} \right)
\end{math}
for each $g\in C$.

If $X$ is a subshift and $F_\Acal$ is a cellular automaton, it is easy
to see that $F_\Acal(X)$ is also a subshift. If, in addition,
\begin{math}
  F_\Acal(X) \subseteq X ,
\end{math}
we say that $\Acal$ is a \ca\ on the subshift $X$. From now on, when
we speak of cellular automata on $\GG$ without specifying any
subshift, we will imply that such subshift is the full shift.

We may refer to injectivity, surjectivity, etc. of the cellular
automaton $\Acal$ on the subshift $X$ meaning the corresponding
properties of $F_\Acal$ when restricted to $X$.  Since $X$ is a
compact metric space, it follows from the Curtis-Hedlund-Lyndon
theorem that the inverse of the global transition function of a
bijective cellular automaton $\Acal$ is itself the global transition
function of some cellular automaton.  In this case, we say that
$\Acal$ is \emph{reversible}. A group $\GG$ is \emph{surjunctive} if
for every finite set $S$, every injective cellular automaton on the
full shift $S^\GG$ is surjective. Currently, there are no known
examples of non-surjunctive groups.

\begin{conjecture}[{\citealp{gott73}}]
  Every injective \ca\ on a full shift is surjective.
\end{conjecture}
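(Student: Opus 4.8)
The final statement is Gottschalk's \emph{surjunctivity conjecture}; as the paragraph just above it records, no non-surjunctive group is known, so a complete proof is out of reach, and what follows is a plan only for the cases currently within range, together with an indication of why the general case resists it.

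The reduction common to all cases is immediate: injectivity of a \ca\ entails pre-injectivity, since identifying two distinct asymptotic configurations would already break injectivity; hence it suffices to show that every pre-injective \ca\ on a full shift $S^\GG$ is surjective. For an \emph{amenable} group $\GG$ I would establish this in its contrapositive form --- a non-surjective \ca\ on $S^\GG$ is never pre-injective --- by the Moore counting argument. Fix nonempty finite sets $F_n$ satisfying the Følner condition of the definition above for finite sets $K$ exhausting $\GG$ and $\varepsilon \to 0$. If $F_\Acal$ is not surjective, compactness yields a disk $D_{\gen,r}$ carrying a pattern that never occurs in any image; tiling $F_n$ by disjoint translates of $D_{\gen,r}$ shows that the patterns on $F_n$ realized by images number at most $|S|^{(1-\delta)|F_n|}$ for a fixed $\delta>0$. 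Since one may freely prescribe $|S|^{|F_n|}$ patterns on $F_n$, a pigeonhole count provides, for $n$ large, two distinct patterns on $F_n$ that agree near the boundary yet induce the same image on the interior; gluing each to one common exterior configuration produces two distinct asymptotic configurations with equal image, contradicting pre-injectivity. The one step needing care is the boundary bookkeeping: $F_\Acal(c)$ restricted to $F_n$ depends on $c$ restricted to $F_n\Neigh$, so every cardinality estimate must absorb a correction of size $|F_n\Neigh \setminus F_n|$, and it is precisely the Følner condition that makes this $o(|F_n|)$ and hence negligible beside the exponential gain $|S|^{\delta|F_n|}$.

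For a \emph{sofic} group the same counting philosophy is transported onto finite graphs that locally resemble the Cayley graph: copy the local rule onto a large sofic approximation, count images against pre-images on that finite model where injectivity forces a bijection, and transfer back the conclusion that asymptotically all patterns lie in the image, using soficity to bound the discrepancy between the finite model and $\GG$. The main obstacle --- and the reason the statement is still a conjecture --- is that an arbitrary finitely generated group need be neither amenable nor sofic, so there is no Følner sequence and no finite approximation on which to run the counting, and I would not expect the post-surjectivity or balancedness results developed later in this paper to help, since they constrain post-surjective maps rather than merely injective ones.
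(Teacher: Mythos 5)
The statement you were asked about is Gottschalk's surjunctivity conjecture, and the paper does not prove it --- it is stated as a conjecture precisely because it is open (no non-surjunctive group is known), so there is no proof of the paper's to compare against. Your decision to treat it as unprovable in full generality and to sketch only the known partial cases is therefore the correct stance, and those sketches are essentially the standard ones: the amenable case is the Moore--Myhill counting argument (injective $\Rightarrow$ pre-injective, plus the Garden of Eden theorem), and the sofic case is the Gromov--Weiss argument that the paper itself cites when it records that sofic groups are surjunctive.

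One caution about your framing, though. You open by saying it ``suffices to show that every pre-injective \ca\ on a full shift $S^\GG$ is surjective.'' That reduction is logically sound as a sufficient condition, but the reduced statement is simply false beyond amenable groups: as the paper notes, on non-amenable groups surjectivity and pre-injectivity appear to be independent, and in fact pre-injective non-surjective \ca\ exist on free groups, which are sofic. So this route can only ever carry the amenable case, and presenting it as the common reduction ``for all cases currently within range'' overstates it. Your sofic paragraph implicitly acknowledges this by counting with injectivity itself (a bijection on the finite model) rather than with pre-injectivity, which is exactly what the Gromov--Weiss proof does; it would be cleaner to make explicit that the pre-injectivity reduction is abandoned there. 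With that adjustment your account matches the state of the art: amenable via Garden of Eden, sofic via sofic approximations and injectivity, and no known strategy for a general finitely generated group.
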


If $\GG$ is a subgroup of a group $\Gamma$ and
\begin{math}
  \Acal = \langle S, \Neigh, f \rangle
\end{math}
is a cellular automaton on $\GG$, the cellular automaton
$\Acal^\Gamma$ \emph{induced} by $\Acal$ on $\Gamma$ has the same set
of states, neighborhood, and local update rule as $\Acal$, and maps
$S^\Gamma$ (instead of $S^\GG$) into itself via
\begin{math}
  F_{\Acal^\Gamma}(c)(\gamma)
  =
  f \left( c(\gamma \cdot n_1), \ldots, c(\gamma \cdot n_m) \right)
\end{math}
for every $\gamma \in \Gamma$. We also say that $\Acal$ is the
\emph{restriction} of $\Acal^\Gamma$ to $\GG$.
In addition, if $X\subseteq{S^\GG}$ is a subshift defined by a set
$\Forb$ of forbidden patterns on $\GG$, then the subshift
$X^\Gamma\subseteq{S^\Gamma}$ obtained from the same set $\Forb$ of
forbidden patterns satisfies the following property: if $\Acal$ is a
\ca\ on $X$, then $\Acal^\Gamma$ is a \ca\ on $X^\Gamma$, and vice
versa. (Here, it is fundamental that all the forbidden patterns have
their supports in $\GG$.)
It turns out (see~\citep[Lemma 4.3]{c09}) that induction of subshifts
does not depend on the choice of $\Forb$, and that injectivity and
surjectivity are preserved by both induction and restriction (see
also~\citep[Section 1.7]{csc10} and~\citep[Theorem 5.3]{c09}).

Let
\begin{math}
  \Acal = \langle S, \Neigh, f \rangle
\end{math}
be a \ca\ on a subshift $X$, let
\begin{math}
  p : E \to S
\end{math}
be an admissible
pattern for $X$, and let
\begin{math}
  E \Neigh \subseteq M \in \PF(\GG) .
\end{math}
A \emph{pre-image} of $p$ on $M$ under $\Acal$ is a pattern
\begin{math}
  q : M \to S
\end{math}
that is admissible for $X$ and is such that
\begin{math}
q \update{f} p .
\end{math}
An \emph{orphan} is an admissible pattern that has no admissible
pre-image, or equivalently, a pattern that is admissible for $X$ but
not admissible for $F_\Acal(X)$. Similarly, a configuration which is
not in the image of $X$ by $F_\Acal$ is a \emph{Garden of Eden} for
$\Acal$. By a compactness argument, every Garden of Eden contains an
orphan. We call this fact the \emph{orphan pattern principle}.
A cellular automaton $\Acal$ is \emph{pre-injective} if every two
asymptotic configurations $c, e$ satisfying $F_\Acal(c)=F_\Acal(e)$
are equal. The \emph{Garden of Eden theorem} (see~\citep{csms99})
states that, for \ca\ on amenable groups, pre-injectivity is
equivalent to surjectivity; on non-amenable groups, the two properties
are independent of each other (see~\citep{b10} and~\citep{b16}).

Let $\GG$ be a finitely generated group, let $\gen$ be a finite set of
generators for $\GG$, and let $S$ be a finite set.  A subshift
\begin{math}
  X\subseteq S^\GG
\end{math}
is \emph{strongly irreducible} if there exists $r\geq1$ such that,
for every two admissible patterns
\begin{math}
  p_1 : E_1 \to S, p_2 : E_2 \to S
\end{math}
such that
\begin{math}
  d_\gen(E_1, E_2) \geq r ,
\end{math}
there exists $c \in X$ such that
\begin{math}
  \restrict{c}{E_1} = p_1
\end{math}
and
\begin{math}
  \restrict{c}{E_2} = p_2 .
\end{math}
We then say that $r$ is a \emph{constant of strong irreducibility} for
$X$ with respect to~$\gen$.  The notion of strong irreducibility does
not depend on the choice of the finite set of generators, albeit the
associated constant of strong irreducibility usually does. If no
ambiguity is possible, we will suppose $\gen$ fixed once and for all,
and always speak of $r$ relative to $\gen$.  For $\GG=\Zset$, strong
irreducibility is equivalent to existence of $r\geq1$ such that, for
every two
\begin{math}
  u, v \in \Lang_X ,
\end{math}
there exists $w\in{S^r}$ satisfying
\begin{math}
  u w v \in \Lang_X .
\end{math}
Clearly, every full shift is strongly irreducible.

As a consequence of the definition, strongly irreducible subshifts are
\emph{mixing}: given two open sets $U,V\subseteq{X}$, the set of those
$g\in\GG$ such that
\begin{math}
  U \cap \sigma_g^{-1}(V) = \emptyset
\end{math}
is, at most, finite.
In addition to this, as by~\citep[Theorem 8.1.16]{lm95}, the Garden of
Eden theorem is still valid on strongly irreducible subshifts of
finite type. We remark that for one-dimensional subshifts of finite
type, strong irreducibility is equivalent to the mixing property.

Another property of strongly irreducible subshifts, which will have a
crucial role in the next section, is that they allow a ``cut and
paste'' technique which is very common in proofs involving the full
shift, but may be inapplicable for more general shifts.

\begin{proposition}
  \label{prop:cut-and-paste}
  Let
  \begin{math}
    X \subseteq S^\GG
  \end{math}
  be a strongly irreducible subshift, let $c\in{X}$, and let
  \begin{math}
    p : E \to S
  \end{math}
  be an admissible pattern for $X$. There exists $c'\in{X}$ asymptotic
  to $c$ such that
  \begin{math}
    \restrict{c'}{E} = p .
  \end{math}
\end{proposition}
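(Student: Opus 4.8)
The plan is to realize $c'$ as a limit of configurations in $X$ obtained by gluing the pattern $p$ onto larger and larger portions of $c$, with each gluing justified by strong irreducibility. Let $r$ be a constant of strong irreducibility for $X$ with respect to $\gen$, and set
\begin{math}
  E' = \{ g \in \GG \mid d_\gen(g, E) \leq r - 1 \} .
\end{math}
Since $E$ is finite and disks in the Cayley graph of $\GG$ are finite, $E' \in \PF(\GG)$; moreover $E \subseteq E'$ and, by construction, $d_\gen(E, \GG \setminus E') \geq r$.

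Next I would carry out the gluings. For each $n \geq 0$ put $W_n = D_{\gen,n} \setminus E'$, a finite subset of $\GG \setminus E'$, so that $d_\gen(E, W_n) \geq r$; and since $D_{\gen,n} \uparrow \GG$ we have $W_n \uparrow \GG \setminus E'$ as $n \to \infty$. The pattern $p$ is admissible for $X$ by hypothesis, and $\restrict{c}{W_n}$ is admissible because $c \in X$. Strong irreducibility then yields $c_n \in X$ with $\restrict{c_n}{E} = p$ and $\restrict{c_n}{W_n} = \restrict{c}{W_n}$. (If $W_n = \emptyset$, which can occur only when $\GG$ is finite, simply take for $c_n$ any element of $X$ extending $p$; such an element exists because $p$ is admissible.)

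Finally I would pass to a limit. By compactness of $S^\GG$ the sequence $(c_n)_{n \geq 0}$ has a subsequence converging to some $c' \in S^\GG$, and $c' \in X$ since $X$ is closed. As $\restrict{c_n}{E} = p$ for every $n$ and $E$ is finite, $\restrict{c'}{E} = p$. For any $g \in \GG \setminus E'$ there is an $N$ with $g \in W_n$ for all $n \geq N$, hence $c_n(g) = c(g)$ for all $n \geq N$, and taking the limit along the subsequence gives $c'(g) = c(g)$. Therefore $\Delta(c, c') \subseteq E' \in \PF(\GG)$, so $c'$ is asymptotic to $c$, which is exactly what is claimed.

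This is a compactness-plus-exhaustion argument with no serious obstacle; the only points that need care are (i) thickening $E$ to $E'$ so the distance hypothesis of strong irreducibility holds while $E'$ stays finite, and (ii) observing that strong irreducibility matches $p$ with only one finite window of $c$ at a time, so agreement of $c'$ with $c$ on all of $\GG \setminus E'$ is obtained only in the limit.
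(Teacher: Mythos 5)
Your proof is correct and follows essentially the same route as the paper's: glue $p$ to the restrictions of $c$ on an exhausting sequence of finite sets kept at distance at least $r$ from $E$ via strong irreducibility, then extract a limit point by compactness. The only cosmetic difference is that the paper first normalizes $E$ to a disk $D_n$ and uses annuli $D_{n+r+k}\setminus D_{n+r}$, whereas you thicken $E$ to $E'$ and use $D_n\setminus E'$; the argument is the same.
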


\begin{proof}
  It is not restrictive to suppose $E=D_n$ for suitable $n\geq0$.  Let
  $r\geq1$ be a constant of strong irreducibility for $X$.  Writing
  \begin{math}
    E_k = D_{n+r+k} \setminus D_{n+r}
  \end{math}
  for $k\geq1$, we have of course
  \begin{math}
    d(E, E_k) = r .
  \end{math}
  Set
  \begin{math}
    p_k = \restrict{c}{E_k} .
  \end{math}
  By strong irreducibility, there exists $c_k\in{X}$ such that
  \begin{math}
    \restrict{c_k}{E} = p
  \end{math}
  and
  \begin{math}
    \restrict{c_k}{E_k} = p_k .
  \end{math}
  Then every limit point $c'$ of
  \begin{math}
    \{c_k\}_{k \geq 1} ,
  \end{math}
  which exists and belongs to $X$ because of compactness, satisfies
  the thesis.
\end{proof}

Induction and restriction do not affect strong irreducibility.

\begin{proposition}
  Let $\GG$ and $\Gamma$ be finitely generated groups, where $\GG$ is
  a subgroup of $\Gamma$, and let $S$ be a finite set. Let
  \begin{math}
    X \subseteq S^\GG
  \end{math}
  be a subshift and let
  \begin{math}
    X^\Gamma\subseteq S^\Gamma
  \end{math}
  be the subshift induced by $X$. If one between $X$ and $X^\Gamma$ is
  strongly irreducible, so is the other.
\end{proposition}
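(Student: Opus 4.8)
The plan is to exploit the decomposition of $\Gamma$ into left cosets of $\GG$, using that every forbidden pattern defining $X$ has support inside $\GG$, so that each $\Gamma$-translate of such a pattern has support inside a single left coset of $\GG$. Fix a transversal $\{\gamma_j\}_{j}$ for the left cosets of $\GG$ in $\Gamma$ with $\gamma_0 = 1_\Gamma$, and for $c \in S^\Gamma$ let $c_j \in S^\GG$ be the slice $c_j(g) = c(\gamma_j g)$. The preceding observation yields the basic dictionary: $c \in X^\Gamma$ if and only if $c_j \in X$ for every $j$. From it I would record two routine facts: (i) if $q : C \to S$ is admissible for $X^\Gamma$, then for every $j$ the slice pattern $q^{(j)}$ given by $q^{(j)}(g) = q(\gamma_j g)$ on $\{g \in \GG : \gamma_j g \in C\}$ is admissible for $X$; and (ii) if $p : E \to S$ with $E \subseteq \GG$ is admissible for $X$, then, viewed as a pattern with support $E \subseteq \Gamma$, it is admissible for $X^\Gamma$ (spread a witness $c \in X$ to the configuration that equals $c$ on every coset). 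I tacitly assume $X \neq \emptyset$; since $X = \emptyset$ if and only if $X^\Gamma = \emptyset$, the empty case is vacuous.

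The second ingredient is a comparison of the relevant word metrics. Fix finite generating sets $\gen_\GG$ of $\GG$ and $\gen_\Gamma$ of $\Gamma$; as strong irreducibility does not depend on the chosen generating set, it suffices to verify it for these. With $\ell = \max\{\|x\|_{\gen_\Gamma} : x \in \gen_\GG\}$ one gets $\|h\|_{\gen_\Gamma} \leq \ell\,\|h\|_{\gen_\GG}$ for all $h \in \GG$, hence $d_{\gen_\GG}(a,b) \geq d_{\gen_\Gamma}(a,b)/\ell$ for $a,b \in \GG$. In the opposite direction there is in general no bound, since $\GG$ may be embedded in $\Gamma$ with unbounded metric distortion; what remains true is that $D_{\gen_\Gamma,R}$ is a finite set, so $\GG \cap D_{\gen_\Gamma,R}$ is finite and therefore $\rho(R) := \max\{\|h\|_{\gen_\GG} : h \in \GG \cap D_{\gen_\Gamma,R}\}$ is a well-defined (possibly huge, but finite) integer. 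Extracting this finite bound $\rho(R)$ in place of a non-existent Lipschitz comparison is the one genuinely non-formal point, and I expect it to be the main obstacle.

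Granting these, both implications are bookkeeping. Suppose first that $X$ is strongly irreducible with constant $r$ with respect to $\gen_\GG$; I claim $X^\Gamma$ is strongly irreducible with constant $R := \ell r$. Let $q_1, q_2$ be admissible for $X^\Gamma$ on supports $C_1, C_2$ with $d_{\gen_\Gamma}(C_1, C_2) \geq R$, and slice them over the cosets. By (i) each $q_1^{(j)}$ and $q_2^{(j)}$ is admissible for $X$; and since the word metric on $\Gamma$ is left-invariant and $d_{\gen_\GG} \geq d_{\gen_\Gamma}/\ell$ on $\GG$, whenever both are nonempty their supports are at $\gen_\GG$-distance at least $R/\ell = r$. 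Strong irreducibility of $X$ then provides $c_j \in X$ realizing both slice patterns (if only one is nonempty use admissibility of the corresponding pattern; if both are empty use any fixed element of $X$), and $c(\gamma_j g) := c_j(g)$ defines a configuration of $X^\Gamma$ realizing $q_1$ and $q_2$.

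For the converse, suppose $X^\Gamma$ is strongly irreducible with constant $R$ with respect to $\gen_\Gamma$; I claim $X$ is strongly irreducible with constant $r := \rho(R) + 1$. Let $p_1, p_2$ be admissible for $X$ on $E_1, E_2 \subseteq \GG$ with $d_{\gen_\GG}(E_1, E_2) \geq r$. By (ii) they are admissible for $X^\Gamma$, and in fact $d_{\gen_\Gamma}(E_1, E_2) \geq R$: if some $a \in E_1$ and $b \in E_2$ had $d_{\gen_\Gamma}(a,b) \leq R$, then $a^{-1}b$ would lie in $\GG \cap D_{\gen_\Gamma,R}$, forcing $d_{\gen_\GG}(a,b) = \|a^{-1}b\|_{\gen_\GG} \leq \rho(R) < r$, against the hypothesis. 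Strong irreducibility of $X^\Gamma$ now yields $\hat c \in X^\Gamma$ with $\restrict{\hat c}{E_1} = p_1$ and $\restrict{\hat c}{E_2} = p_2$; its slice $\hat c_0 = \restrict{\hat c}{\GG}$ lies in $X$ by the dictionary and restricts to $p_1$ on $E_1$ and to $p_2$ on $E_2$, which is what was needed. The degenerate cases ($\GG$ or $\Gamma$ trivial) are immediate and handled separately.
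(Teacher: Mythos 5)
Your proof is correct, and its overall architecture is the same as the paper's: the coset dictionary ($c \in X^\Gamma$ if and only if every coset slice of $c$ lies in $X$), gluing coset-by-coset via strong irreducibility of $X$ in one direction, and gluing in $X^\Gamma$ and then restricting to $\GG$ in the other; your constant $\ell r$ in the first direction is exactly the paper's $Mr$. Where you genuinely diverge is the converse direction: the paper keeps the same constant $r$, asserting that $d_{\gen_\GG}(E_1,E_2) \geq r$ already forces $d_{\gen_\Gamma}(E_1,E_2) \geq r$, which is only safe when $\GG$ sits in $\Gamma$ without metric distortion --- as you correctly observe, there is in general no Lipschitz bound from $d_{\gen_\Gamma}$ back to $d_{\gen_\GG}$ (distorted subgroups such as the center of the Heisenberg group show this). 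Your substitute --- finiteness of $\GG \cap D_{\gen_\Gamma,R}$, the bound $\rho(R)$, and the constant $\rho(R)+1$ --- is precisely what makes this step work for arbitrary embeddings, at the harmless price of a larger constant, since strong irreducibility only requires the existence of some constant. The supporting facts (your (i) and (ii), the spreading construction, and the empty-subshift remark) are routine and correctly handled, so on this one point your write-up is actually more careful than the published argument.
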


\begin{proof}
  To fix ideas, let $\gen_\GG$ and $\gen_\Gamma$ be two finite sets of
  generators for $\GG$ and $\Gamma$, respectively, let $J$ be a set of
  representatives of the left cosets of $\GG$ in $\Gamma$, so that
  \begin{math}
    \Gamma = \bigsqcup_{j \in J} j\GG 
  \end{math},
  and let $\Forb$ be a set of forbidden patterns that determines $X$.

  Suppose that $X^\Gamma$ is strongly irreducible and $r\geq1$ a
  constant of strong irreducibility for $X^\Gamma$. Take $r'\geq1$
  such that
  \begin{math}
    D_{B_\Gamma, r-1} \cap \GG \subseteq D_{B_\GG, r'-1}
  \end{math},
  which exists because the left-hand side is finite. Let
  \begin{math}
    E_1, E_2 \subseteq \GG
  \end{math}
  satisfy
  \begin{math}
    d_{B_\GG}(E_1, E_2) \geq r'
  \end{math}.
  Then, by construction,
  \begin{math}
    d_{B_\Gamma}(E_1, E_2) \geq r
  \end{math}
  too. Given two admissible patterns
  \begin{math}
    p_1 : E_1 \to S
  \end{math},
  \begin{math}
    p_2 : E_2 \to S
  \end{math},
  take $c\in{X^\Gamma}$ such that
  \begin{math}
    \restrict{c}{E_1} = p_1
  \end{math}
  and
  \begin{math}
    \restrict{c}{E_2} = p_2
  \end{math}.
  Then
  \begin{math}
    \restrict{c}{\GG} \in X
  \end{math}
  has the same property.

  Next, suppose that $X$ is strongly irreducible and $r\geq1$ is a
  constant of strong irreducibility for $X$. Let $M\geq 1$ be such
  that every element of $\gen_\GG$ can be written as a product of at
  most $M$ elements of $\gen_\Gamma$. Then $Mr$ is a constant of
  strong irreducibility for $X^\Gamma$. Indeed, let
  \begin{math}
    p_1 : E_1 \to S, p_2 : E_2 \to S
  \end{math}
  be two admissible patterns such that
  \begin{math}
    d_{\gen_\Gamma}(E_1, E_2) \geq Mr
  \end{math}.
  For $i=1,2$, there exist at most finitely many $j\in{J}$ such that
  \begin{math}
    E_{i,j} = E_i \cap j\GG \neq \emptyset .
  \end{math}
  If for a given $j$ both $E_{1,j}$ and $E_{2,j}$ are nonempty, then
  \begin{math}
    d_{B_\Gamma}(E_{1,j}, E_{2,j}) \geq d_{B_\Gamma}(E_1, E_2) \geq Mr ,
  \end{math}
  hence, since $\gen_\GG\subseteq\gen_\Gamma$ and multiplications on
  the left are isometries,
  \begin{math}
    d_{B_\GG}(j^{-1} E_{1,j}, j^{-1} E_{2,j}) \geq r
  \end{math}
  by definition of $M$. We can then construct a configuration
  $c\in{S^\Gamma}$ such that
  \begin{math}
    \restrict{c}{E_1} = p_1
  \end{math}
  and
  \begin{math}
    \restrict{c}{E_2} = p_2
  \end{math}
  as follows:
  \begin{itemize}
  \item
    If $x\in{j\GG}$ and both $E_{1,j}$ and $E_{2,j}$ are nonempty, let
    \begin{math}
      c(x) = c_j(j^{-1} x) ,
    \end{math}
    where $c_j\in{X}$ is such that
    \begin{math}
      c_j(j^{-1}x) = p_1(x)
    \end{math}
    if $x\in{E_{1,j}}$ and
    \begin{math}
      c_j(j^{-1}x) = p_2(x)
    \end{math}
    if $x\in{E_{2,j}}$.
  \item
    If $x\in{j\GG}$ and, of $E_{1,j}$ and $E_{2,j}$, one is nonempty
    and the other is empty, then, calling $E$ the nonempty one and $p$
    the corresponding pattern, let
    \begin{math}
      c(x) = c_j(j^{-1} x) ,
    \end{math}
    where $c_j\in{X}$ is such that
    \begin{math}
      c_j(j^{-1}x) = p(x)
    \end{math}
    for every $x\in{E}$.
  \item
    If $x\in{j\GG}$ and $E_{1,j}$ and $E_{2,j}$ are both empty, let
    \begin{math}
      c(x) = \bar{c}(j^{-1} x)
    \end{math}
    where $\bar{c}\in{X}$ is fixed.
  \end{itemize}
  It is easy to see that no pattern from $\Forb$ can have any
  occurrences in $c$, so that $c\in X^\Gamma$.
\end{proof}

\section{Post-surjectivity}

The notion of post-surjectivity is a sort of ``dual'' to
pre-injectivity: it is a strengthening of surjectivity, in a similar
way that pre-injectivity is a weakening of injectivity. The maps that
are both pre-injective and post-surjective were studied
in~\citep{kt15} under the name of complete pre-injective maps.

\begin{definition}
  \label{def:postsurj}
  Let $\GG$ be a group, $S$ a finite set, and
  \begin{math}
    X \subseteq S^\GG
  \end{math}
  a strongly irreducible subshift. A cellular automaton
  \begin{math}
    \Acal = \langle S, \Neigh, f \rangle
  \end{math}
  on $X$ is \emph{post-surjective} if, however given $c\in{X}$ and a
  predecessor $e\in{X}$ of $c$, every configuration $c'\in{X}$
  asymptotic to $c$ has a predecessor $e'\in{X}$ asymptotic to $e$.
\end{definition}

When $X = S^\GG$ is the full shift, if no ambiguity is present, we
will simply say that the \ca\ is post-surjective.

\begin{example}
  \label{ex:revers-postsurj}
  Every reversible cellular automaton is post-surjective.
  If $R\geq0$ is such that the neighborhood of the inverse \ca\ is
  included in $D_R$, and $N\geq0$ is such that $c$ and $c'$ coincide
  outside $D_N$, then their unique pre-images $e$ and $e'$ must
  coincide outside $D_{N+R}$.
  \hfill\exampleqed
\end{example}

\begin{example}
  \label{ex:xor}
  The $\xor$ \ca\ with the right-hand neighbor (the one-dimensional
  elementary \ca\ with rule 102) is surjective, but not
  post-surjective.
  As the $\xor$ function is a permutation of each of its arguments
  given the other, every
  \begin{math}
    c \in \deux^\Zset
  \end{math}
  has two pre-images, uniquely determined by their value in a single
  point.  However (actually: because of this!)
  \begin{math}
    \ldots 000 \ldots
  \end{math}
  is a fixed point, but
  \begin{math}
    \ldots 010 \ldots
  \end{math}
  only has pre-images that take value 1 infinitely often.
  \hfill\exampleqed
\end{example}

The qualification ``post-surjective'' is well earned:

\begin{proposition}
  \label{prop:post-surj}
  Let
  \begin{math}
    X \subseteq S^\GG
  \end{math}
  be a strongly irreducible subshift. Every post-surjective \ca\ on
  $X$ is surjective.
\end{proposition}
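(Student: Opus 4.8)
The plan is to prove $F_\Acal(X) = X$ by establishing the reverse inclusion $X \subseteq F_\Acal(X)$; the inclusion $F_\Acal(X) \subseteq X$ holds by the standing hypothesis that $\Acal$ is a \ca\ on $X$, and $F_\Acal(X)$ is itself a subshift, hence in particular closed. If $X = \emptyset$ there is nothing to prove, so assume $X \neq \emptyset$ and fix a reference pair once and for all: pick any $e_0 \in X$ and set $c_0 = F_\Acal(e_0) \in X$, so that $c_0$ admits the predecessor $e_0 \in X$. This is the pair to which post-surjectivity will be applied.

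Now take an arbitrary $c' \in X$; I want to show $c' \in F_\Acal(X)$ by approximating it by configurations in $F_\Acal(X)$. For each $n \geq 0$, the restriction $\restrict{c'}{D_n}$ is an admissible pattern for $X$, so Proposition~\ref{prop:cut-and-paste} — this is exactly where strong irreducibility is used — yields a configuration $c_n \in X$ that is asymptotic to $c_0$ and satisfies $\restrict{c_n}{D_n} = \restrict{c'}{D_n}$. Applying post-surjectivity to the pair $(c_0, e_0)$ and the asymptotic configuration $c_n$ produces a predecessor $e_n \in X$ with $F_\Acal(e_n) = c_n$; in particular $c_n \in F_\Acal(X)$. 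Since $c_n$ and $c'$ agree on $D_n$ and the disks $D_n$ exhaust $\GG$, the sequence $(c_n)_{n \geq 0}$ converges to $c'$ in the prodiscrete topology, and as $F_\Acal(X)$ is closed we conclude $c' \in F_\Acal(X)$. As $c'$ was arbitrary, $X \subseteq F_\Acal(X)$, whence $F_\Acal(X) = X$, i.e. $\Acal$ is surjective on $X$.

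The argument is short, and I do not anticipate a genuine obstacle; the single point that needs care is that post-surjectivity, taken in isolation, says nothing about an arbitrary $c'$ — it only transfers predecessors between configurations asymptotic to one \emph{fixed} $c$ for which a predecessor is already available. The function of strong irreducibility, through the ``cut and paste'' Proposition~\ref{prop:cut-and-paste}, is precisely to relocate the finite piece of local data $\restrict{c'}{D_n}$ onto a configuration $c_n$ that is asymptotic to the reference configuration $c_0$, so that post-surjectivity becomes applicable; closedness of the image subshift $F_\Acal(X)$ then upgrades this patternwise information to a configuration-level conclusion. (One should also recall here that mere surjectivity of $F_\Acal$ would not suffice for the later theorems — but for this proposition surjectivity is all that is claimed, and the above suffices.)
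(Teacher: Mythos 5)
Your proof is correct and follows essentially the same route as the paper's: fix a reference pair $c_0 = F_\Acal(e_0)$, use strong irreducibility via Proposition~\ref{prop:cut-and-paste} to transplant the local data onto a configuration asymptotic to $c_0$, and apply post-surjectivity to obtain pre-images. The only cosmetic difference is the finishing step: the paper concludes at the level of patterns and invokes the orphan pattern principle, while you take a limit of the configurations $c_n$ and use closedness (compactness) of the image subshift $F_\Acal(X)$ — two packagings of the same compactness argument.
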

\begin{proof}
  Let $r\geq1$ be the constant of strong irreducibility of $X$, \ie,
  let every two admissible patterns whose supports have distance
  at least $r$ be jointly subpatterns of some configuration.
  Take an arbitrary $e\in{X}$ and set $c=F(e)$. Let $p:E\to{S}$ be an
  admissible pattern for $X$. By Proposition \ref{prop:cut-and-paste},
  there exists $c'\in{X}$ asymptotic to $c$ such that
  \begin{math}
    \restrict{c'}{E} = p .
  \end{math}
  By post-surjectivity, such $c'$ has a pre-image in $X$, which means
  $p$ has a pre-image admissible for $X$. The thesis follows from the
  orphan pattern principle.
\end{proof}

From Proposition~\ref{prop:post-surj} together with \citep[Theorem
  4.7]{ff03} follows:
\begin{proposition}
  \label{prop:postsurj-amen}
  Let $\GG$ be a finitely generated amenable group and let
  \begin{math}
    X \subseteq S^\GG
  \end{math}
  be a strongly irreducible \sft. Every post-surjective \ca\ on $X$ is
  pre-injective.
\end{proposition}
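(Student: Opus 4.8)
The plan is to deduce the statement by chaining two results that are already at our disposal, so that no genuinely new argument is required.

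First I would apply Proposition~\ref{prop:post-surj}. Its only hypothesis is that the underlying subshift be strongly irreducible, which is part of the present assumptions, so it yields at once that a post-surjective \ca\ on $X$ is surjective.

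Next I would invoke the Garden of Eden theorem for strongly irreducible shifts of finite type over amenable groups, namely \citep[Theorem 4.7]{ff03}: under precisely the hypotheses we have here (the group $\GG$ amenable, and $X$ a strongly irreducible \sft), surjectivity and pre-injectivity of a \ca\ on $X$ are equivalent. Applying the ``surjective $\Rightarrow$ pre-injective'' direction to the \ca\ obtained in the previous step gives pre-injectivity, which is the claim.

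The only thing to verify is that the hypotheses of the two cited results are actually met: strong irreducibility is used in both steps, amenability only in the second, and the finite-type condition only in the second. Since all of these are assumed, I do not expect any obstacle beyond this bookkeeping; the proposition is a corollary of Proposition~\ref{prop:post-surj} and \citep[Theorem 4.7]{ff03}.
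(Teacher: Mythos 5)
Your argument is correct and is exactly the paper's: the proposition is stated there as an immediate consequence of Proposition~\ref{prop:post-surj} (post-surjective $\Rightarrow$ surjective on a strongly irreducible subshift) combined with the Garden of Eden theorem of \citep[Theorem 4.7]{ff03} for strongly irreducible \sft{}s over amenable groups. Nothing further is needed.
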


In addition, via a reasoning similar to the one employed
in~\citep[Section 1.7]{csc10} and~\citep[Remark 18]{cgk13}, we can
prove:

\begin{proposition}
  \label{prop:postsurj-restrict}
  Let $\GG$ and $\Gamma$ be finitely generated groups where $\GG$ is a
  subgroup of $\Gamma$.  Let
  \begin{math}
    X \subseteq S^\GG
  \end{math}
  be a strongly irreducible subshift and let
  \begin{math}
    X^\Gamma \subseteq S^\Gamma
  \end{math}
  be the induced subshift. Let
  \begin{math}
    \Acal = \langle S, \Neigh, f \rangle
  \end{math}
  be a cellular automaton on $X$ and $\Acal^\Gamma$ the induced
  cellular automaton on $X^\Gamma$. Then $\Acal$ is post-surjective if
  and only if $\Acal^\Gamma$ is post-surjective.
\end{proposition}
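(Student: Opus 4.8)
The plan is to prove both directions of the equivalence separately, exploiting that induction/restriction preserve strong irreducibility (the previous proposition), that they preserve surjectivity (cited earlier), and — crucially — that the neighborhood $\Neigh$, the local rule $f$, the set of forbidden patterns $F$, and the notion of ``asymptotic'' all live in $\GG$ and are therefore unaffected by passing to $\Gamma$. Throughout I would fix a set $J$ of representatives of the left cosets of $\GG$ in $\Gamma$, so that $\Gamma=\bigsqcup_{j\in J} j\GG$, and observe that a configuration $c\in S^\Gamma$ decomposes into the family of ``slices'' $c_j\in S^\GG$ given by $c_j(g)=c(jg)$; since $\Neigh\subseteq\GG$, the global rule $F_{\Acal^\Gamma}$ acts slice-by-slice, i.e.\ $(F_{\Acal^\Gamma}(c))_j=F_\Acal(c_j)$, and two configurations of $S^\Gamma$ are asymptotic iff all their slices are asymptotic and all but finitely many of the slices agree exactly.

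For the ``only if'' direction, assume $\Acal$ is post-surjective; I must show $\Acal^\Gamma$ is. Given $c\in X^\Gamma$, a predecessor $e\in X^\Gamma$ with $F_{\Acal^\Gamma}(e)=c$, and $c'\in X^\Gamma$ asymptotic to $c$, write $c=c''$, etc., in slices. For each $j$ we have $F_\Acal(e_j)=c_j$ in $X$, and $c'_j$ is asymptotic to $c_j$; for all but finitely many $j$ in fact $c'_j=c_j$, and for those I simply take $e'_j=e_j$. For the finitely many exceptional $j$, post-surjectivity of $\Acal$ on $X$ furnishes a predecessor $e'_j\in X$ of $c'_j$ asymptotic to $e_j$. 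Assembling the $e'_j$ into a configuration $e'\in S^\Gamma$, I need $e'\in X^\Gamma$: this holds because each slice $e'_j\in X$ means no forbidden pattern of $F$ (whose supports lie in $\GG$, hence within a single coset) occurs in $e'$ — here I am using that $F$ only forbids patterns supported in $\GG$, exactly as in the induced-subshift construction of the previous proposition. Then $F_{\Acal^\Gamma}(e')=c'$ slice-by-slice, and $e'$ is asymptotic to $e$ since only finitely many slices changed and each changed slice moved only by a finite amount. This direction is essentially bookkeeping.

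For the ``if'' direction, assume $\Acal^\Gamma$ is post-surjective; I must show $\Acal$ is. Given $c\in X$, a predecessor $e\in X$, and $c'\in X$ asymptotic to $c$, I would lift to $\Gamma$ as follows: pick a fixed background configuration $\bar e\in X$, and form $E\in S^\Gamma$ whose slice over $1_\GG$ is $e$ and whose every other slice is $\bar e$; set $C=F_{\Acal^\Gamma}(E)$, so the slice of $C$ over $1_\GG$ is $c$ and over every other $j$ is $F_\Acal(\bar e)$. Now let $C'\in S^\Gamma$ agree with $C$ on all cosets except the one over $1_\GG$, where its slice is $c'$; since $c'$ is asymptotic to $c$, $C'$ is asymptotic to $C$, and $C'\in X^\Gamma$ (again because each slice lies in $X$ and $F$ is supported in $\GG$). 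By post-surjectivity of $\Acal^\Gamma$, $C'$ has a predecessor $E'\in X^\Gamma$ asymptotic to $E$. Restricting $E'$ to the coset $1_\GG\cdot\GG=\GG$ gives a slice $e'=E'|_\GG\in X$ with $F_\Acal(e')=C'|_\GG=c'$, and $e'$ is asymptotic to $e=E|_\GG$ because $E'$ is asymptotic to $E$ (so in particular their slices over $1_\GG$ differ only finitely). Hence $\Acal$ is post-surjective on $X$.

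The main obstacle, and the point that most needs care, is the bookkeeping around ``asymptotic'' under the coset decomposition: I must be precise that two configurations in $S^\Gamma$ are asymptotic exactly when all slices are asymptotic \emph{and} all but finitely many slices coincide, and verify that the slice-wise construction in each direction produces exactly this situation (only finitely many slices altered, each by a finite amount). A secondary point to state carefully — though it is immediate from the hypothesis that all forbidden patterns have supports in $\GG$ — is that a configuration of $S^\Gamma$ lies in $X^\Gamma$ if and only if every one of its $\GG$-slices lies in $X$; this is what guarantees the reassembled configurations $e'$, $E$, $C'$ genuinely belong to the induced subshift. Everything else reduces to the fact, already established or cited in the excerpt, that $\Neigh\subseteq\GG$ makes $F_{\Acal^\Gamma}$ act independently on each coset.
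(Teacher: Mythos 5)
Your proof is correct and follows essentially the same route as the paper: the ``only if'' direction is the paper's coset-decomposition argument verbatim (slice $c$, $c'$, $e$ along $\Gamma=\bigsqcup_{j\in J}j\GG$, fix the finitely many disturbed slices by post-surjectivity of $\Acal$, and reassemble using that the forbidden patterns are supported in $\GG$). For the converse, which the paper dismisses as immediate, your background-configuration lifting is the standard way to spell it out and is sound.
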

In particular, post-surjectivity of arbitrary \ca\ is equivalent to
post-surjectivity on the subgroup generated by the neighborhood.

\begin{proof}
  Suppose $\Acal$ is post-surjective. Let $J$ be a set of
  representatives of the left cosets of $\GG$ in $\Gamma$, \ie, let
  \begin{math}
    \Gamma = \bigsqcup_{j \in J} j\GG .
  \end{math}
  Let
  \begin{math}
    c, c' \in X^\Gamma
  \end{math}
  be two asymptotic configurations and let $e$ be a pre-image of
  $c$. For every $j\in J$ and $g\in\GG$ set
  \begin{eqnarray*}
    c_j(g) & = & c(jg) \,;
    \\
    c'_j(g) & = & c'(jg) \,;
    \\
    e_j(g) & = & e(jg) \,.
  \end{eqnarray*}
  By construction, each $c_j$ belongs to $X$, is asymptotic to $c'_j$
  and has $e_j$, which also belongs to $X$, as a pre-image according
  to $\Acal$.
  Moreover, as $c$ and $c'$ are asymptotic in the first place,
  $c'_j\neq c_j$ only for finitely many $j\in{J}$. For every $j\in{J}$
  let $e'_j\in{X}$ be a pre-image of $c_j'$ according to $\Acal$
  asymptotic to $e_j$, if $c'_j\neq c_j$, and $e_j$ itself if
  $c'_j=c_j$.  Then,
  \begin{displaymath}
    e'(\gamma) = e'_j(g)
    \;\; \iff \;\; \gamma = jg
  \end{displaymath}
  defines a pre-image of $c'$ according to $\Acal^\Gamma$ which
  belongs to $X^\Gamma$ and is asymptotic to $e$.

  The converse implication is immediate.
\end{proof}

\begin{proposition}
  \label{prop:postsurj-1d}
  Let $X\subseteq{S^\Zset}$ be a strongly irreducible \sft\ and let
  \begin{math}
    \Acal = \langle S, \Neigh, f \rangle
  \end{math}
  be a post-surjective \ca\ on $X$. Then $\Acal$ is reversible.
\end{proposition}
\begin{proof}
  Suppose $F=F_\Acal$ is not a bijection. For \ca\ on one-dimensional
  strongly irreducible \sft, reversibility is equivalent to
  injectivity on periodic configurations. Namely, if two distinct
  configurations with the same image exist, then one can construct two
  distinct \emph{periodic} configurations with the same image. Let
  then
  \begin{math}
    u, v, w \in S^\ast
  \end{math}
  be such that
  \begin{math}
    e_u = \ldots u u u \ldots ,
  \end{math}
  the configuration obtained by extending $u$ periodically in both
  directions, and
  \begin{math}
    e_v = \ldots v v v \ldots
  \end{math}
  are different and have the same image
  \begin{math}
    c = \ldots w w w \ldots .
  \end{math}
  It is not restrictive to suppose $|u|=|v|=|w|$. Without loss of
  generality, we also assume that $X$ is defined by a set of forbidden
  words of length at most $|u|$.

  Let $r\geq1$ be a strong irreducibility constant for $X$ and let
  \begin{math}
    p, q \in S^r
  \end{math}
  be such that
  \begin{math}
    u p v , v q u \in \Lang_X .
  \end{math}
  The two configurations
  \begin{math}
    c_{u,v} = F( \ldots u u p v v \ldots )
  \end{math}
  and
  \begin{math}
    c_{v,u} = F( \ldots v v q u u \ldots )
  \end{math}
  are both asymptotic to $c$. By post-surjectivity, there exist
  $x,y\in\Lang_X$ such that
  \begin{math}
    e_{u,v} = \ldots u u x v v \ldots
  \end{math}
  and
  \begin{math}
    e_{v,u} = \ldots v v y u u \ldots
  \end{math}
  satisfy
  \begin{math}
    F( e_{u,v} ) = F( e_{v,u} ) = c .
  \end{math}
  Again, it is not restrictive to suppose that
  \begin{math}
    |x| = |y| = m \cdot |u|
  \end{math}
  for some $m \geq 1$, and that $x$ and $y$ start in $e_{u,v}$ and
  $e_{v,u}$ at the same point $i\in\Zset$.

  Let us now consider the configuration
  \begin{math}
    e' = \ldots u u x v^N y u u \ldots .
  \end{math}
  By our previous discussion, for $N$ large enough (\eg, so that $x$
  and $y$ do not have overlapping neighborhoods) $F_\Acal(e')$ cannot
  help but be $c$.  Now, recall that $e_u$ is also a pre-image of $c$
  and note that $e_u$ and $e'$ are asymptotic but distinct. Then
  $\Acal$ is surjective (by Proposition~\ref{prop:post-surj}) but not
  pre-injective, contradicting the Garden of Eden
  theorem~\citep[Theorem 8.1.16]{lm95} as well as
  Proposition~\ref{prop:postsurj-amen}.

  A graphical description of the argument is provided by
  Figure~\ref{fig:postsurj-1d}.
\end{proof}

\begin{figure}
  \centering
  
  \begin{subfigure}[b]{0.45\textwidth}
  	\centering
    \includegraphics[width=0.9\textwidth]{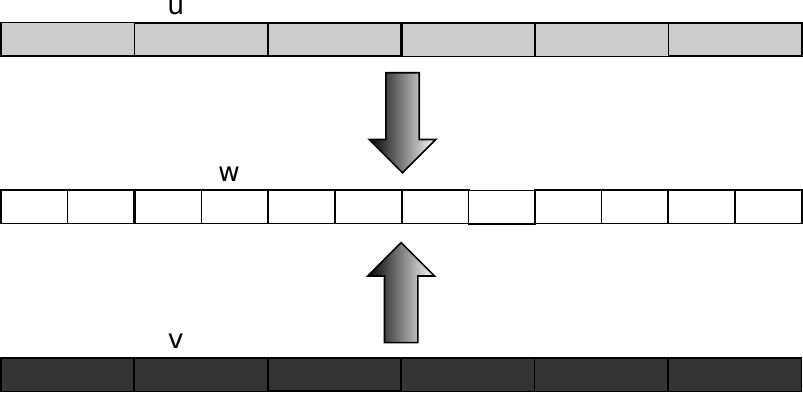}
    \caption{%
    }
    \label{subfig:1d-ps-rev-nonrev}
  \end{subfigure}
  \begin{subfigure}[b]{0.45\textwidth}
  	\centering
    \includegraphics[width=0.9\textwidth]{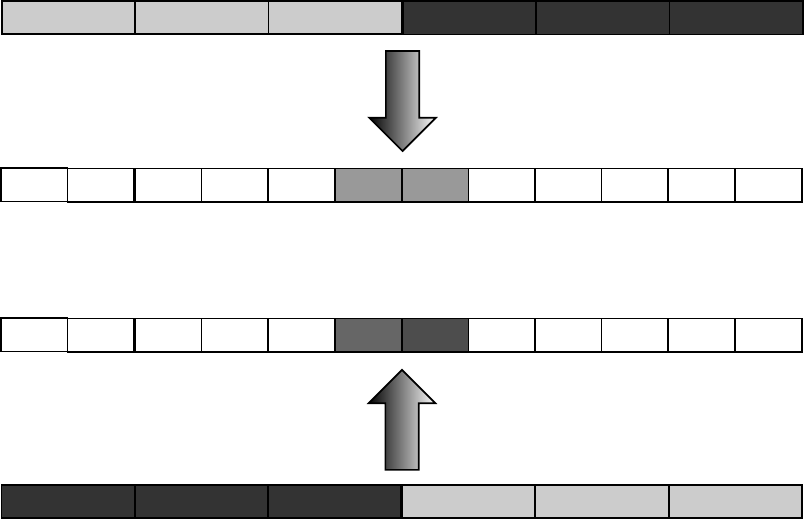}
    \caption{%
    }
    \label{subfig:1d-ps-rev-swaphalves}
  \end{subfigure}

  \bigskip

  \begin{subfigure}[b]{0.45\textwidth}
  	\centering
    \includegraphics[width=0.9\textwidth]{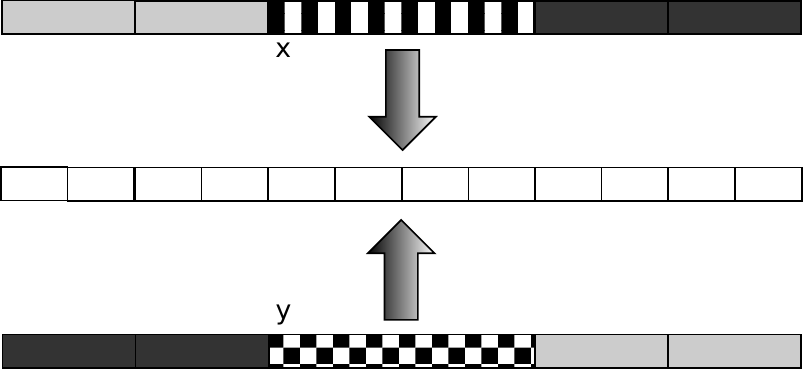}
    \caption{%
    }
    \label{subfig:1d-ps-rev-postsurj}
  \end{subfigure}
  \begin{subfigure}[b]{0.45\textwidth}
  	\centering
    \includegraphics[width=0.9\textwidth]{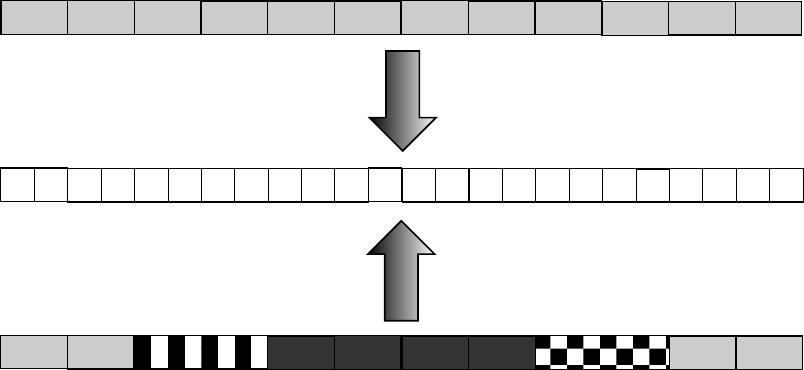}
    \caption{%
    }
    \label{subfig:1d-ps-rev-goe}
  \end{subfigure}
  \caption{%
    A graphical description of the argument in
    Proposition~\ref{prop:postsurj-1d} for the full shift.
    (\subref{subfig:1d-ps-rev-nonrev})
    Let a 1D periodic configuration $w$ have two different (periodic)
    preimages $u$ and $v$.
    (\subref{subfig:1d-ps-rev-swaphalves})
    By swapping the right-hand halves of the preimages, the new images
    only differ from the initial one in finitely many points.
    (\subref{subfig:1d-ps-rev-postsurj})
    By post-surjectivity, we can change them in finitely many points,
    and get two preimages of the initial configuration.
    (\subref{subfig:1d-ps-rev-goe})
    Then a violation of the Garden of Eden theorem occurs.
  }
  \label{fig:postsurj-1d}
\end{figure}

Proposition~\ref{prop:postsurj-1d} depends critically on the group
being $\Zset$, where \ca\ that are injective on periodic
configurations are reversible. Moreover, in our final step, we invoke
the Garden of Eden theorem, which we know from~\citep{csms99} not to
hold for \ca\ on generic groups. Not all is lost, however: maybe, by
explicitly adding the pre-injectivity requirement, we can recover
Proposition~\ref{prop:postsurj-1d} on more general groups.

It turns out that it is so, at least for \ca\ on full shifts. To see
this, we need some preparations.

\begin{lemma}
  \label{lem:postsurj-N}
  Let $\Acal$ be a post-surjective \ca\ on a finitely generated group
  $\GG$ and let $F$ be its global transition function.  There exists
  $N\geq0$ such that, given any three configurations
  \begin{math}
    c, c', e
  \end{math}
  with $c=F(e)$ and
  \begin{math}
    \Delta(c, c') = \{ 1_\GG \} ,
  \end{math}
  there exists a pre-image $e'$ of $c'$ which coincides with $e$
  outside $D_N$.
\end{lemma}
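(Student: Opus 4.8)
The plan is a compactness argument: post-surjectivity gives, for each single-cell perturbation of an image, \emph{some} asymptotic correction of the pre-image, and we must upgrade this to a single radius $N$ that works for all instances simultaneously. Throughout we may assume $1_\GG \in \Neigh$ (adjoining $1_\GG$ to $\Neigh$ leaves $F$ unchanged), and we fix $\rho \geq 0$ with $\Neigh \subseteq D_\rho$. Recall also that the full shift is strongly irreducible, so Definition~\ref{def:postsurj} applies with $X = S^\GG$ and every configuration is an admissible predecessor; in particular the grafting step below is unconstrained.

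Suppose, towards a contradiction, that no such $N$ exists. Then for each $n \geq 0$ we may pick configurations $c_n, c_n', e_n$ with $c_n = F(e_n)$ and $\Delta(c_n, c_n') = \{1_\GG\}$ for which \emph{every} pre-image of $c_n'$ differs from $e_n$ at some point outside $D_n$. Using compactness of $S^\GG$, pass to a subsequence along which $e_n \to e_\infty$ and $c_n' \to c_\infty'$; by continuity of $F$ this forces $c_n = F(e_n) \to F(e_\infty) =: c_\infty$. After passing to a further subsequence (here $S$ finite is used), assume the values $c_n(1_\GG)$ and $c_n'(1_\GG)$ are independent of $n$; since $c_n$ and $c_n'$ agree off $1_\GG$ and disagree at $1_\GG$, passing to the limit gives $\Delta(c_\infty, c_\infty') = \{1_\GG\}$. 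Now apply post-surjectivity to $c_\infty$, its pre-image $e_\infty$, and the asymptotic configuration $c_\infty'$: we obtain a pre-image $e_\infty'$ of $c_\infty'$ with $\Delta(e_\infty, e_\infty')$ finite, and we fix $K \geq \rho$ with $\Delta(e_\infty, e_\infty') \subseteq D_K$.

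The heart of the proof is a grafting step showing that this one correction can be transplanted onto all sufficiently large $e_n$. Choose $n$ so large that $n \geq K$, that $e_n$ agrees with $e_\infty$ on $D_{K+2\rho}$, and that $c_n'$ agrees with $c_\infty'$ on $D_{K+2\rho}$; define $e_n'$ to equal $e_\infty'$ on $D_{K+2\rho}$ and $e_n$ elsewhere. Since $e_\infty' = e_\infty = e_n$ on the annulus $D_{K+2\rho}\setminus D_K$, this definition agrees with $e_n$ on all of $\GG \setminus D_K$, so $\Delta(e_n, e_n') \subseteq D_K \subseteq D_n$. To check $F(e_n') = c_n'$, take $g \in \GG$. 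If $g\Neigh$ meets $D_K$, then $g \in D_{K+\rho}$ and hence $g\Neigh \subseteq D_{K+2\rho}$, where $e_n'$ coincides with $e_\infty'$; thus $F(e_n')(g) = F(e_\infty')(g) = c_\infty'(g) = c_n'(g)$. Otherwise $g\Neigh$ is disjoint from $D_K$, so $e_n'$ agrees with $e_n$ on $g\Neigh$ and $F(e_n')(g) = F(e_n)(g) = c_n(g)$; moreover $g \neq 1_\GG$, because $g = 1_\GG$ would give $g\Neigh = \Neigh \ni 1_\GG$, meeting $D_K$. Hence $c_n(g) = c_n'(g)$ as well. So $e_n'$ is a pre-image of $c_n'$ coinciding with $e_n$ outside $D_n$, contradicting the choice of $e_n$.

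The step I expect to need the most care is precisely this boundary bookkeeping in the grafting: one must ensure that the patched configuration still produces $c_n'$ at every cell $g$ whose neighbourhood straddles $\partial D_K$. That is exactly what the buffer radius $2\rho$ together with the normalisations $1_\GG \in \Neigh$ and $K \geq \rho$ are for — any $g$ with $g\Neigh$ meeting $D_K$ automatically has $g\Neigh \subseteq D_{K+2\rho}$, so such a cell reads entirely from the already-correct $e_\infty'$ part, while a $g$ with $g\Neigh$ away from $D_K$ reads entirely from $e_n$ and cannot be the perturbed cell. The remaining ingredients (sequential compactness of $S^\GG$, finiteness of $S$, continuity of $F$) are routine.
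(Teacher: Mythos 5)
Your proof is correct and follows essentially the same route as the paper's: assume no uniform $N$ exists, extract limit configurations by compactness, apply post-surjectivity at the limit, and graft the finite correction back onto $e_n$ for large $n$ to contradict the choice of the sequence. The only difference is that you make the boundary bookkeeping explicit (the buffer $D_{K+2\rho}$, the normalisation $1_\GG\in\Neigh$) where the paper simply takes $\ell\gg m$; this is a faithful, slightly more detailed rendering of the same argument.
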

\begin{proof}
  By contradiction, assume that for every $n\geq0$, there exist
  \begin{math}
    c_n, c'_n \in S^\GG 
  \end{math}
  and
  \begin{math}
    e_n \in F^{-1}(c_n)
  \end{math}
  such that
  \begin{math}
    \Delta(c_n, c'_n) = \{ 1_\GG \} ,
  \end{math}
  but every
  \begin{math}
    e'_n \in F^{-1}(c'_n)
  \end{math}
  differs from $e_n$ on some point outside $D_n$.  By compactness,
  there exits a sequence $n_i$ such that the limits
  \begin{math}
    c = \lim_{i \to \infty} c_{n_i} ,
    c' = \lim_{i \to \infty} c'_{n_i} ,
  \end{math}
  and
  \begin{math}
    e = \lim_{i \to \infty} e_{n_i} ,
  \end{math}
  all exist. Then $F(e)=c$ by continuity. By construction, $c$
  differs from $c'$ only at $1_\GG$. By post-surjectivity, there
  exists a pre-image $e'$ of $c'$ such that
  \begin{math}
  \Delta(e,e') \subseteq D_m
  \end{math}
  for some $m\geq0$.  Take $\ell\gg{m}$ and choose $k$ large enough
  such that
  \begin{math}
    \restrict{c'_{n_k}}{D_{\ell}} = \restrict{c'}{D_{\ell}}
  \end{math}
  and
  \begin{math}
    \restrict{e_{n_k}}{D_{\ell}} = \restrict{e}{D_{\ell}} .
  \end{math}
  Define $\tilde{e}$ so that it agrees with $e'$ on $D_{\ell}$ and
  with $e_{n_k}$ outside $D_m$. Such $\tilde{e}$ is well defined,
  because $e'$, $e$, and $e_{n_k}$ agree on
  \begin{math}
    D_{\ell} \setminus D_m .
  \end{math}
  Then $\tilde{e}$ is a pre-image of $c'_{n_k}$ which is asymptotic to
  $e_{n_k}$ and agrees with $e_{n_k}$ outside $D_{n_k}$, thus
  contradicting our assumption.
\end{proof}

By repeatedly applying Lemma~\ref{lem:postsurj-N} we get:
\begin{proposition}
  \label{prop:postsurj-N}
  Let $\Acal$ be a post-surjective \ca\ on a finitely generated group
  $\GG$ and let $F$ be its global transition function. There exists
  $N\geq 0$ such that, for every $r \geq 0$, however given three
  configurations $c,c',e$ with $c=F(e)$ and
  \begin{math}
    \Delta(c,c') \subseteq D_r ,
  \end{math}
  there exists a pre-image $e'$ of $c'$ such that
  \begin{math}
    \Delta(e, e') \subseteq D_{N+r} .
  \end{math}
\end{proposition}

Assuming also pre-injectivity, we get the following stronger property:
\begin{corollary}
  \label{cor:tk}
  Let $\Acal$ be a pre-injective, post-surjective \ca\ on a finitely
  generated group $\GG$ and let $F$ be its global transition function.
  There exists $M \in \PF(\GG)$ with the following property: for every
  pair $(e,e')$ of asymptotic configurations,
  \begin{math}
    \Delta(e, e') \subseteq \Delta(F(e), F(e)') M
  \end{math}.
\end{corollary}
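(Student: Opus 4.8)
The plan is to set $M = D_N$, where $N$ is the constant produced by Lemma~\ref{lem:postsurj-N}, and to obtain the estimate by correcting the image one cell at a time, finally pinning down $e'$ by pre-injectivity. First I would restate Lemma~\ref{lem:postsurj-N} in translated form: because $F$ commutes with the translations and $\Delta(\sigma_g(c),\sigma_g(c'))=\{1_\GG\}$ precisely when $\Delta(c,c')=\{g\}$, the lemma yields, for every $g\in\GG$ and every $c,c',e$ with $c=F(e)$ and $\Delta(c,c')=\{g\}$, a pre-image $e'$ of $c'$ with $\Delta(e,e')\subseteq D_N(g)=gD_N$.

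Now take an asymptotic pair $(e,e')$, put $c=F(e)$ and $c'=F(e')$, and let $K_0=\Delta(c,c')$; since $e$ and $e'$ are asymptotic we have $K_0\subseteq\Delta(e,e')\,\Neigh^{-1}$, so $K_0$ is finite, and of course $K_0\subseteq K$. Enumerate $K_0=\{g_1,\dots,g_t\}$ and let $c=c_0,c_1,\dots,c_t=c'$ be the configurations in which $c_i$ agrees with $c'$ on $\{g_1,\dots,g_i\}$ and with $c$ everywhere else, so that $\Delta(c_{i-1},c_i)=\{g_i\}$. Starting from $e_0=e$, which is a pre-image of $c_0$, apply the translated lemma $t$ times to get pre-images $e_i$ of $c_i$ with $\Delta(e_{i-1},e_i)\subseteq g_iD_N$. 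The final $e_t$ is then a pre-image of $c'$ satisfying $\Delta(e,e_t)\subseteq\bigcup_{i=1}^{t}g_iD_N=K_0D_N$.

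In particular $e_t$ is asymptotic to $e$, hence to $e'$, and both $e_t$ and $e'$ are pre-images of $c'$; by pre-injectivity $e_t=e'$, so $\Delta(e,e')\subseteq K_0D_N\subseteq KD_N=KM$, as required. The one point that needs care is the translation bookkeeping: one must verify that correcting the image at $g_i$ disturbs the pre-image only inside the disk $D_N(g_i)=g_iD_N$ centred at $g_i$, so that the accumulated disturbance stays within the $N$-neighbourhood $K_0D_N$ of $K_0$ instead of inflating to one large disk. This is also why it is cleaner to iterate Lemma~\ref{lem:postsurj-N} directly than to quote the weaker bound $\Delta(e,e')\subseteq D_{N+r}$ from Corollary~\ref{cor:postsurj-N}; pre-injectivity then supplies the final upgrade from ``some pre-image of $c'$ near $e$'' to ``$e'$ itself is near $e$''.
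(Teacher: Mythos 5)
Your proof is correct and follows essentially the route the paper intends: the paper leaves the corollary's proof implicit, deriving it by repeated (translated) application of Lemma~\ref{lem:postsurj-N} to correct the image one cell at a time, with pre-injectivity identifying the constructed pre-image with $e'$, exactly as you do with $M=D_N$. Your explicit bookkeeping of the translated disks $g_iD_N$, giving the localized bound $\Delta(e,e')\subseteq K D_N$ rather than the centred bound of Corollary~\ref{cor:postsurj-N}, is precisely the point the paper relies on.
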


We are now ready to prove:
\begin{theorem}
  \label{thm:pre-post-rev}
  Every pre-injective, post-surjective cellular automaton on a full
  shift is reversible.
\end{theorem}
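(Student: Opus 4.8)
The plan is to show that $F := F_\Acal$ is bijective; once that is done, $\Acal$ is reversible by the standard fact, recalled in the background section, that the inverse of a bijective cellular automaton is itself a cellular automaton. Surjectivity of $F$ is already given by Proposition~\ref{prop:post-surj}, so the entire remaining task is to prove injectivity, and the tool for that is the \emph{uniform} radius supplied by Corollary~\ref{cor:tk}. Since post-surjectivity, pre-injectivity and bijectivity all transfer between a cellular automaton and the one it induces on an overgroup (Proposition~\ref{prop:postsurj-restrict} and the remarks preceding it), I would first reduce to the case where $\GG$ is finitely generated, fix a finite generating set, and write $\|\cdot\|$, $D_r$ for the associated length and disks, choosing $\rho$ with $\Neigh \subseteq D_\rho$.

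Next I would argue by contradiction. Suppose $F$ is not injective; pick $e_1 \neq e_2$ with $F(e_1) = F(e_2) =: c$ and a point $g_0$ with $e_1(g_0) \neq e_2(g_0)$. Let $M \in \PF(\GG)$ be as in Corollary~\ref{cor:tk}, choose $\mu$ with $M \subseteq D_\mu$, and take $n$ larger than $\|g_0\| + \rho + \mu$. The key move is to glue $e_1$ and $e_2$ across the sphere of radius $n$: let $e$ agree with $e_1$ on $D_n$ and with $e_2$ outside $D_n$. Then $e$ and $e_2$ differ only inside the finite set $D_n$, so they are asymptotic, and they still differ at $g_0$. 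On the other hand $F(e)(g)$ depends only on $\restrict{e}{g\Neigh}$, which equals $\restrict{e_1}{g\Neigh}$ whenever $g\Neigh \subseteq D_n$ (in particular when $\|g\| \leq n - \rho$) and equals $\restrict{e_2}{g\Neigh}$ whenever $g\Neigh \cap D_n = \emptyset$ (in particular when $\|g\| > n + \rho$); since $F(e_1) = F(e_2) = c$, the configurations $F(e)$ and $F(e_2)$ can disagree only on the finite annulus $A_n := \{g \in \GG : n - \rho < \|g\| \leq n + \rho\}$.

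Then I would apply Corollary~\ref{cor:tk} to the asymptotic pair $(e, e_2)$ with $K = A_n$, obtaining $\Delta(e, e_2) \subseteq A_n M$. Since $g_0 \in \Delta(e, e_2)$, we could write $g_0 = a m$ with $a \in A_n$ and $m \in M$, so that $\|g_0\| \geq \|a\| - \|m\| > (n - \rho) - \mu > \|g_0\|$, a contradiction. Hence $F$ is injective, hence bijective, and $\Acal$ is reversible.

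The only step I expect to require real insight is the surgery: one must hit upon gluing $e_1$ and $e_2$ along a large sphere, so that the resulting asymptotic pair has images differing only on a thin shell out near radius $n$ while the configurations themselves still differ at $g_0$ near the origin; the bounded-range statement of Corollary~\ref{cor:tk} --- which is where the compactness argument behind Lemma~\ref{lem:postsurj-N} and the pre-injectivity hypothesis are really used --- then forbids $g_0$ from lying in $A_n M$ for $n$ large. The reduction to a finitely generated group and the final passage from bijectivity to reversibility are routine.
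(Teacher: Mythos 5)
Your argument is correct, and it takes a genuinely different route from the paper's. The paper never proves injectivity as such: it builds the inverse explicitly, defining a local rule $h$ on the neighborhood $M^{-1}$ (with $M$ from Corollary~\ref{cor:tk}) by patching an arbitrary pattern $p$ into $v=F(u)$ for a fixed uniform configuration $u$, using post-surjectivity plus pre-injectivity to get a unique pre-image $x_{g,p}$ asymptotic to $u$, and then checking that the resulting global map $H$ satisfies $F\circ H=\mathrm{id}$ on the dense set of configurations asymptotic to $v$ (extended everywhere by continuity) and $H\circ F=\mathrm{id}$ by pre-injectivity and continuity. You instead reduce to proving that $F$ is injective --- surjectivity being free from Proposition~\ref{prop:post-surj} since the full shift is strongly irreducible --- via the sphere-gluing surgery: splice $e_1$ and $e_2$ along $D_n$, note the images of the spliced configuration and $e_2$ differ only on the annulus $A_n$, and let Corollary~\ref{cor:tk} push $\Delta(e,e_2)$ into $A_nM$, which misses $g_0$ once $n>\|g_0\|+\rho+\mu$; reversibility then comes from the standard compactness fact that a bijective \ca\ has a \ca\ inverse. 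Both proofs hinge on the same quantitative input, Corollary~\ref{cor:tk}, and both make the same (routine, unstated) use of the fact that pre-injectivity passes to the restriction to the finitely generated subgroup in the reduction step. What the paper's construction buys is an explicit inverse with an explicit neighborhood $M^{-1}$, i.e.\ quantitative control on the inverse radius; what yours buys is brevity and a cleaner logical structure --- no density/continuity bookkeeping, and an argument that is visibly the general-group analogue of the cut-and-paste idea in Example~\ref{ex:postsurj-1d} --- at the cost of leaving the inverse entirely implicit.
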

\begin{proof}
  By Proposition~\ref{prop:postsurj-restrict}, it is sufficient to
  consider the case where $\GG$ is finitely generated.

  Let $\Acal$ be a pre-injective and post-surjective \ca\ on the group
  $\GG$, let $S$ be its set of states, and let $F$ be its global
  transition function. Let $M$ be as in Corollary~\ref{cor:tk}.
  We construct a new \ca\ with neighborhood $\Neigh=M^{-1}$. Calling
  $H$ the global transition function of the new \ca, we first prove
  that $H$ is a \emph{right} inverse of $F$. We then show that $H$ is
  also a \emph{left} inverse for $F$, thus completing the proof.

  To construct the local update rule
  \begin{math}
    h : S^\Neigh \to S ,
  \end{math}
  we proceed as follows. Fix a constant configuration $u$ and let
  $v=F(u)$. Given $g\in\GG$ and
  \begin{math}
    p : \Neigh \to S ,
  \end{math}
  for every $i\in\GG$, put
  \begin{equation} \label{eq:cut-and-paste}
    y_{g,p}(i) =
    \ifthenelsedef{i \in g \Neigh}{p(g^{-1} i)}{v(i)}
  \end{equation}
  that is, let $y_{g,p}$ be obtained from $v$ by cutting away the
  piece with support $g \Neigh$ and pasting $p$ as a ``patch'' for the
  ``hole''. By post-surjectivity and pre-injectivity combined, there
  exists a unique
  \begin{math}
    x_{g,p} \in S^\GG
  \end{math}
  asymptotic to $u$ such that
  \begin{math}
  F(x_{g,p}) = y_{g,p} .
  \end{math}
  Let then
  \begin{equation} \label{eq:pi-ps-inverse}
    h(p) = x_{g,p}(g) \,.
  \end{equation}
  Observe that (\ref{eq:pi-ps-inverse}) does \emph{not} depend on $g$:
  if
  \begin{math}
    g' = i \cdot g ,
  \end{math}
  then
  \begin{math}
    y_{g',p}
    =
    \sigma_i ( F(x_{g,p}) )
    =
    F( \sigma_i(x_{g,p}) ) ,
  \end{math}
  so that
  \begin{math}
    x_{g', p} = \sigma_i ( x_{g, p} )
  \end{math}
  by pre-injectivity, and
  \begin{math}
    x_{g', p} (g') = x_{g, p} (g) .
  \end{math}

  Let now $y$ be \emph{any} configuration asymptotic to $v$ such that
  \begin{math}
    \restrict{y}{g\Neigh}=p ,
  \end{math}
  and let $x$ be the unique pre-image of $y$ asymptotic to $u$. We
  claim that $x(g)=h(p)$.  To prove this, we observe that, as $y$ and
  $y_{g, p}$ are both asymptotic to $v$ and they agree on
  \begin{math}
    g \Neigh = g M^{-1} ,
  \end{math}
  the set $K=\Delta(y,y_{g,p})$ is finite and is contained in
  \begin{math}
    \GG \setminus g M^{-1} .
  \end{math}
  By Corollary~\ref{cor:tk}, their pre-images $x$ and $x_{g,p}$ can
  disagree only on
  \begin{math}
    K M \subseteq \left( \GG \setminus gM^{-1} \right) M .
  \end{math}
  The set $KM$ does not contain $g$, because if
  \begin{math}
    g \in \left( \GG \setminus gM^{-1} \right) M ,
  \end{math}
  then for some $m\in{M}$,
  \begin{math}
    g m^{-1} \in \left( \GG \setminus gM^{-1} \right) ,
  \end{math}
  which is not the case! Therefore,
  \begin{math}
    x(g) = x_{g,p}(g) = h(p) ,
  \end{math}
  as we claimed.

  The argument above holds whatever the pattern $p:\Neigh\to{S}$
  is. By applying it finitely many times to arbitrary finitely many
  points, we determine the following fact: if $y$ is any configuration
  which is asymptotic to $v$, then $F(H(y))=y$. But the set of
  configurations asymptotic to $v$ is dense in $S^\GG$, so it follows
  from continuity of $F$ and $H$ that $F(H(y))=y$ for every
  $y\in{S^\GG}$.

  We have thus shown that $H$ is a right inverse of $F$.  We next
  verify that $H$ is also a left inverse of $F$.

  Let $x$ be a configuration asymptotic to $u$, and set $y=F(x)$. Note
  that $y$ is asymptotic to $v$. The two configurations $x$ and $H(y)$
  are both asymptotic to $u$, and furthermore,
  \begin{math}
    F(x) = y = F(H(y)) .
  \end{math}
  Therefore, by the pre-injectivity of $F$, $x$ and $H(y)$ must
  coincide, that is, $H(F(x))=x$. The continuity of $F$ and $H$ now
  implies that the equality $H(F(x))=x$ holds even if $x$ is not
  asymptotic to $u$. Hence, $H$ is a left inverse for $F$.
\end{proof}

\begin{corollary}
  \label{cor:amen-postsurj-rev}
  A cellular automaton on an amenable group (in particular, a
  $d$-dimensional \ca) is post-surjective if and only if it is
  reversible.
\end{corollary}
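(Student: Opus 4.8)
The plan is to combine results already established in this section, so that almost nothing new needs to be done. One direction is immediate: Example~\ref{ex:revers-postsurj} shows that every reversible cellular automaton, over an \emph{arbitrary} group, is post-surjective; in particular this holds over amenable groups. Hence it suffices to prove the converse, that over an amenable group post-surjectivity implies reversibility.

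For this I would argue in two steps. First, observe that the full shift $S^\GG$ is itself a strongly irreducible \sft\ (with constant of strong irreducibility $r=1$), so Proposition~\ref{prop:postsurj-amen} applies directly and tells us that a post-surjective \ca\ on $S^\GG$ over an amenable group is pre-injective. (Unwinding that proposition, this is just the Garden of Eden theorem at work: post-surjectivity gives surjectivity by Proposition~\ref{prop:post-surj}, and on an amenable group surjectivity and pre-injectivity coincide.) Second, now that the \ca\ is known to be both pre-injective and post-surjective on the full shift, Theorem~\ref{thm:pre-post-rev} applies verbatim and yields reversibility. This closes the equivalence.

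The parenthetical "$d$-dimensional" case needs no separate treatment: it is merely the remark that $\Zset^d$ is amenable, so it is covered by the general statement.

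I do not anticipate a genuine obstacle here, since all the substance has been front-loaded into Theorem~\ref{thm:pre-post-rev} and into the Garden of Eden theorem invoked through Proposition~\ref{prop:postsurj-amen}. The only points deserving a moment's care are bookkeeping ones: checking that the full shift legitimately qualifies as a strongly irreducible \sft\ so that Proposition~\ref{prop:postsurj-amen} is applicable, and noting that Theorem~\ref{thm:pre-post-rev} is stated precisely for \ca\ on the full shift, which is exactly the setting of this corollary.
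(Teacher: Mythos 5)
Your proposal is correct and matches the argument the paper intends (the corollary is stated without proof precisely because it follows by combining Example~\ref{ex:revers-postsurj}, Proposition~\ref{prop:postsurj-amen} applied to the full shift, and Theorem~\ref{thm:pre-post-rev}). Your bookkeeping remarks about the full shift being a strongly irreducible \sft\ and about Theorem~\ref{thm:pre-post-rev} being stated for the full shift are exactly the right points to check, and they hold.
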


\section{Post-surjectivity on sofic groups}
\label{sec:postsurj-sofic}

After proving Theorem~\ref{thm:pre-post-rev}, we might want to find a
post-surjective cellular automaton that is not pre-injective. However,
the standard examples of surjective \ca\ which are not pre-injective
fail when post-surjectivity is sought instead.  The next example
illustrates how.

\begin{example}
  \label{ex:f2-majority}
  Let $\GG=\mathbb{F}_2$ be the free group on two generators $a,b$,
  \ie, the group of \emph{reduced words} on the alphabet
  \begin{math}
    \gen = \{ a, b, a^{-1}, b^{-1} \}
  \end{math}.
  Let
  \begin{math}
    \Neigh = \gen \cup \{ 1_\GG \} = D_1
  \end{math},
  and for every
  \begin{math}
    x, y, z, w, v \in \deux
  \end{math}
  let
  \begin{math}
    f(x, y, z, w, v)
  \end{math}
  be $1$ if
  \begin{math}
    x + y + z + w + v \geq 3
  \end{math},
  and $0$ otherwise. Then
  \begin{math}
    \Acal = \langle \GG, \deux, \Neigh, f \rangle
  \end{math}
  is the \emph{majority \ca} on $\mathbb{F}_2$.

  The \ca\ $\Acal$ is clearly not pre-injective; however, it is
  surjective. Indeed, a preimage of an arbitrary pattern $p$ on $D_n$,
  for $n\geq 1$, can be obtained from a preimage of the restriction of
  $p$ to $D_{n-1}$ by exploiting the fact that every element of length
  $n$ has three neighbors of length $n+1$. We can tweak the procedure
  a little bit and see that every configuration $c$ has a (not unique)
  \emph{critical} preimage $e$ where, for every $g\in\GG$, exactly
  three between $e(g)$, $e(ga)$, $e(gb)$, $e(ga^{-1})$, and
  $e(gb^{-1})$ have value $c(g)$. An example is provided in
  Figure~\ref{fig:f2-majority}.

  Let $c$ be a configuration such that
  \begin{math}
    c(1_\GG) = c(a) = c(b) = 0
  \end{math},
  \begin{math}
    c(a^{-1}) = c(b^{-1}) = 1
  \end{math},
  and for every $n\geq 1$, each point of length $n$ has at least one
  neighbor of length $n+1$ with value $0$, and at least one neighbor
  of length $n+1$ with value $1$. Let $e$ be a critical preimage for
  $c$ which coincides with $c$ on $D_1$, and let $c'$ only differ from
  $c$ in $1_\GG$. Suppose, for the sake of contradiction, that there
  exists a preimage $e'$ of $c'$ which is asymptotic to $e$. Let $x$
  be a point of maximum length $n=\|x\|$ where $e$ and $e'$ differ.
  Call $e(x)=s$ and $e'(x)=t\neq{s}$. Two cases are possible:
  \begin{enumerate}
  \item
    $n=0$. Then $s=0$, $t=1$, and $e'(g)=e(g)$ for every
    $g\neq{1_\GG}$. But as $e$ is critical and
    \begin{math}
      c(a) = e(a) = e(1_\GG) = 0
    \end{math},
    exactly two between $e'(a^2)$, $e'(ab)$, and $e'(ab^{-1})$ have
    value $1$. As $e'(1_\GG)=1$ too, it must be $c'(a)=1$, against the
    hypothesis that $c$ and $c'$ only differ at $1_\GG$.
  \item
    $n\geq 1$. Let $u$, $v$, and $w$ be the three neighbors of $x$ of
    length $n+1$. As $e$ is critical,
  \begin{math}
    c'(u) = c'(v) = c'(w) = t
  \end{math}.
  But by construction, either $c(u)=s$, or $c(v)=s$, or $c(w)=s$. This
  contradicts that $c$ and $c'$ only differ at $1_\GG$.
  \end{enumerate}
  This proves that $\Acal$ is not post-surjective.
  \hfill\exampleqed
\end{example}

\begin{figure}
  \centering
  \begin{subfigure}[b]{0.45\textwidth}
  	\centering
    \includegraphics[width=0.9\textwidth]{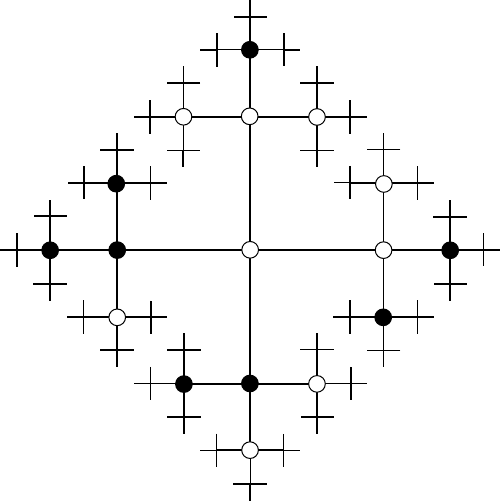}
    \caption{%
    }
    \label{subfig:f2config}
  \end{subfigure}
  \begin{subfigure}[b]{0.45\textwidth}
  	\centering
    \includegraphics[width=0.9\textwidth]{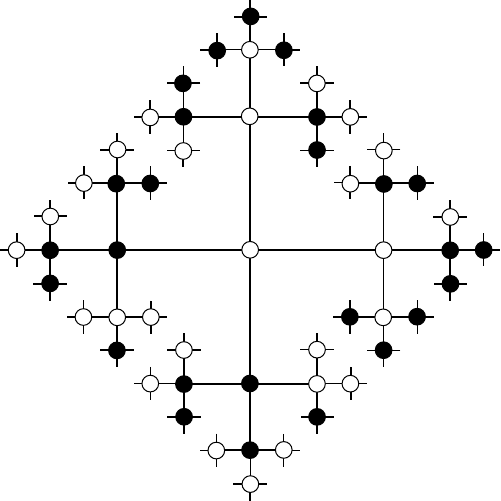}
    \caption{%
    }
    \label{subfig:f2preimg}
  \end{subfigure}
  \caption{%
    (\subref{subfig:f2config})
    A configuration on the free group on two generators, restricted to
    $D_2$.
    (\subref{subfig:f2preimg})
    A critical preimage of the configuration of
    point~\subref{subfig:f2config}, restricted to $D_3$.
  }
  \label{fig:f2-majority}
\end{figure}

The reason behind this failure is that, as we shall see below, finding
such a counterexample amounts to finding a group which is not
\emph{sofic}, and that appears to be a difficult open problem.

The notion of a sofic group was originally introduced
in~\citep{gro99}, but was later reformulated, for finitely generated
groups, in~\citep{weiss00} in combinatorial, rather than geometric,
terms.

\begin{definition}
  \label{def:sofic}
  Let $\GG$ be a finitely generated group and let $\gen$ be a finite
  symmetric set of generators for $\GG$. Let $r\geq0$ be an integer
  and $\varepsilon>0$ a real. An
  \emph{$(r,\varepsilon)$-approximation} of $\GG$ (relative to $\gen$)
  is a $\gen$-labeled graph $(V,E)$ along with a subset
  $U\subseteq{V}$ such that the following hold:
  \begin{enumerate}
  \item
    For every $u\in{U}$, the neighborhood of radius $r$ of $u$ in
    $(V,E)$ is isomorphic to $D_{\gen,r}$ as a labeled graph.
  \item
    \begin{math}
      |U| > (1 - \varepsilon) |V| .
    \end{math}
  \end{enumerate}
  The group $\GG$ is \emph{sofic} (relative to $\gen$) if for every
  choice of $r\geq 0$ and $\varepsilon>0$, there is an
  $(r,\varepsilon)$-approximation of $\GG$ (relative to $\gen$).
\end{definition}

As explained in~\citep{weiss00}, the notion of soficity does not
depend on the generating set $\gen$. For this reason, in the rest of
this section, we will suppose $\gen$ given once and for all.  It is
easy to see that finitely generated residually finite groups and
finitely generated amenable groups are all sofic.

The importance of sofic groups is threefold: firstly, as
per~\citep[Section 3]{weiss00}, sofic groups are surjunctive;
secondly, no examples of non-sofic groups are currently known. We add
a third reason:

\begin{theorem}
  \label{thm:sofic-post-pre}
  Let $\GG$ be a sofic group. Every post-surjective cellular automaton
  on $\GG$ is pre-injective (and therefore reversible).
\end{theorem}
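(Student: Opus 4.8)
The plan is to argue by contradiction using a counting argument over a sequence of sofic approximations, in the spirit of the combinatorial proof of surjunctivity of sofic groups from \citep{weiss00}. Suppose $\Acal$ is post-surjective but not pre-injective; let $F$ be its global transition function. By post-surjectivity and Corollary~\ref{cor:postsurj-N}, there is a constant $N\geq 0$ such that whenever $c=F(e)$ and $c'$ differs from $c$ only on $D_r$, some pre-image $e'$ of $c'$ differs from $e$ only on $D_{N+r}$. Meanwhile, failure of pre-injectivity gives two asymptotic configurations $e\neq \tilde{e}$ with $F(e)=F(\tilde{e})$; fix a finite set $\Omega\in\PF(\GG)$ containing $\Delta(e,\tilde e)$. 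Let $\rho$ be large enough that $D_\rho$ contains $\Omega$ together with the relevant neighborhood-expansions of $\Omega$ under $F$ and under $N$; the point will be that, on a pattern filling a disk of radius $\rho$, one can locally toggle between an ``$e$-type'' and a ``$\tilde e$-type'' filling without changing the image pattern on a slightly smaller disk.

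Next I would fix a sofic $(r,\varepsilon)$-approximation $(V,E)$ with its good set $U$, for $r$ much larger than all the radii above and $\varepsilon$ small. On $V$ one defines a ``local'' version $F_V:S^V\to S^V$ of the cellular automaton by applying the rule $f$ at every vertex whose $D_{\Neigh}$-neighborhood is correctly modeled (the bad set $V\setminus U$ and its immediate vicinity affects only a $c\varepsilon|V|$ fraction of vertices). The two sources of inequality to be balanced are: (i) post-surjectivity forces $F_V$ to be ``almost surjective'' in the sense that almost every image pattern can be corrected locally to hit a genuine image, so $|F_V(S^V)|$ is at least $|S|^{|V|}/|S|^{c\varepsilon|V|}$ — i.e. the image of $F_V$ is exponentially large, missing at most an $\varepsilon$-controlled fraction of the exponent; (ii) the failure of pre-injectivity, transported through the good part of the graph, produces genuinely many collisions: inside each of the $\sim|U|/|D_{2\rho}|$ disjoint good $\rho$-disks we can independently swap an $e$-type filling for a $\tilde e$-type filling without changing $F_V$ outside the bad set, so each fiber of $F_V$ has size at least $2^{\Omega(|V|)}$. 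Combining (i) and (ii), $|S|^{|V|}\geq |F_V(S^V)|\cdot(\text{min fiber size}) \geq |S|^{(1-c\varepsilon)|V|}\cdot 2^{c'|V|}$, and for $\varepsilon$ small this forces $|S|^{c\varepsilon|V|}\geq 2^{c'|V|}$, which is false once $\varepsilon$ is chosen small enough. That contradiction proves the theorem.

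The main obstacle is making step (i) precise: post-surjectivity is a statement about the genuine full shift $S^\GG$, and I must turn it into a statement about the finite graph $F_V$. The mechanism is Corollary~\ref{cor:postsurj-N} together with a compactness/pattern argument: if $p:D_{r-N}\to S$ is a pattern that occurs in $F(S^\GG)$, then by the orphan pattern principle (Proposition~\ref{prop:post-surj} gives surjectivity, so every pattern occurs) it \emph{does} occur, and post-surjectivity lets us realize any asymptotic perturbation. The delicate bookkeeping is to show that a vertex $v\in U$ whose $r$-ball is correctly modeled sees exactly the same local structure as a point of $\GG$, so a pattern appearing around $v$ in some $F_V$-image can be ``read off'' a genuine configuration, and conversely every genuine image pattern is attainable locally in $F_V$ up to the bad set — this is where the gap between ``surjective on $\GG$'' and ``$|F_V(S^V)|$ is nearly maximal'' is bridged, and where one must be careful that the error terms are genuinely $O(\varepsilon|V|)$ in the \emph{exponent} and not merely $o(|V|)$ with a bad constant. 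Step (ii), by contrast, is a routine independent-swap argument provided the good disks are taken far enough apart that their $F$-neighborhoods are disjoint.
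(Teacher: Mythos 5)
Your overall strategy is the same as the paper's (sofic approximation, a counting contradiction between ``large image from post-surjectivity'' and ``small image from mutually erasable patterns planted in packed disks''), but the step you yourself flag as the main obstacle is precisely where the sketch does not go through as written, and it is the heart of the proof. Citing Corollary~\ref{cor:postsurj-N} plus ``local read-off'' is not enough to make the induced finite map nearly surjective: to hit an arbitrary prescribed pattern $q$ on the good set $U$, the cells where $q$ differs from a pattern you already know how to hit are scattered over all of $U$, not contained in any single correctly modeled disk, so the corollary (differences inside $D_r$, correction inside $D_{N+r}$) does not apply; and realizing each disk's pattern separately from some genuine configuration of $\GG$ gives local preimages that need not glue into one pattern on $V$. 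The paper's mechanism (its Lemma~\ref{lem:postsurj-sofic}) is a sequential correction: start from any pair $p_0\update{f}q_0$, change the target \emph{one cell at a time}, and after each single-cell change invoke Lemma~\ref{lem:postsurj-N} --- applicable because the $r$-ball of that one cell is isomorphic to $D_r$ with $r\geq N+2R$ --- to repair the preimage within radius $N$ of that cell. After finitely many steps every $q:U\to S$ is hit \emph{exactly}, so the map $\varphi:S^V\to S^U$ is genuinely surjective and $|\varphi(S^V)|=|S|^{|U|}$, with no $O(\varepsilon|V|)$ error term to control at all. Some such iteration (or an equivalent gluing argument) is the missing idea; without it step (i) is not established.

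The second problem is your combination inequality. The claim that ``each fiber of $F_V$ has size at least $2^{\Omega(|V|)}$'' is false: a pattern on $V$ that contains neither of the two mutually erasable fillings in any of the packed disks cannot be swapped anywhere, and its fiber may be tiny, so there is no uniform lower bound on fiber sizes and $|S|^{|V|}\geq|F_V(S^V)|\cdot(\min\text{ fiber size})$ does not yield the contradiction. The correct form of step (ii) is an upper bound on the number of \emph{distinct images}: in each of the $|W|\geq |V|/(2|D_{2r}|)$ disjoint correctly modeled $r$-disks, the two mutually erasable patterns give the same image, so at most $s^{|D_r|}-1$ of the $s^{|D_r|}$ fillings of that disk are distinguishable by the image, whence $|\varphi(S^V)|\leq \bigl(1-s^{-|D_r|}\bigr)^{|W|}\, s^{|V|}$. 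Choosing $\varepsilon$ with $s^{\varepsilon}\bigl(1-s^{-|D_r|}\bigr)^{1/(2|D_{2r}|)}<1$ makes this strictly smaller than $s^{|U|}$, contradicting the exact surjectivity above. With these two repairs your argument coincides with the paper's proof.
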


As a corollary, cellular automata which are post-surjective, but not
pre-injective, could only exist over non-sofic groups!

To prove Theorem~\ref{thm:sofic-post-pre}, we need two auxiliary lemmas.
Observe that if $f:S^{D_R}\to S$ is the local rule of a cellular
automaton $\Acal$ on a group $\GG$ with a finite generating set
$\gen$, and $(V,E)$ is a $\gen$-labeled graph, then $f$ is applicable
in an obvious fashion to patterns on $V$ at every point $v\in V$ whose
$R$-neighborhood in $(V,E)$ is isomorphic to the disk of radius $R$ in
the Cayley graph of $\GG$ with generating set $\gen$. Therefore, we
extend our notation, and for two patterns
\begin{math}
  p : H \to S
\end{math}
and
\begin{math}
  q : C \to S
\end{math}
with
\begin{math}
  H, C \subseteq V
\end{math},
we write
\begin{math}
  p \update{f} q
\end{math}
if for every $v\in{C}$, the $R$-neighborhood $D_R(v)$ is a subset of
$H$ and is isomorphic to the disk of radius $R$, and furthermore
\begin{math}
  f \big( \restrict{p}{D_R(v)} \big) =q(v) .
\end{math}
Note that even when $\Acal$ is surjective, the induced maps
$S^H\to{S^C}$ are not necessarily surjective.

\begin{example}
  \label{ex:induced-nonsurj}
  Let $\Acal$ be the elementary \ca\ with rule 102 (same as in
  Example~\ref{ex:xor}). Let $(V,E)$ be a cycle on four nodes.
  The $1$-neighborhood of each node is isomorphic to
  $D_1\subseteq\Zset$. Let then $H=C=V$. As each bit is counted twice
  during the update (one as a center, the other as a right neighbor)
  and the rule is linear, the image in $S^C$ of an element of $S^H$
  must have an even number of $1$s. Then $0001\in{S^C}$ has no
  preimage in $S^H$.
  \hfill\exampleqed
\end{example}

\begin{lemma}
  \label{lem:postsurj-sofic}
  Let $\Acal$ be a post-surjective \ca\ on a sofic group $\GG$. Let
  $\Acal$ have state set $S$, neighborhood $\Neigh\subseteq D_R$ and local rule
  $f$, and let $N$ be given by Lemma~\ref{lem:postsurj-N}.
  Consider an $(r,\varepsilon)$-approximation given by a graph $(V,E)$
  and a set $U\subseteq V$, where $\varepsilon>0$ and $r\geq N+2R$.
  For every pattern $q:U\to S$, there is a pattern $p:V\to S$ such
  that $p\update{f}q$.
\end{lemma}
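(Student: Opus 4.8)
The plan is to build the desired pattern $p$ by a finite induction that fixes the values of $p$ one defective region at a time, using the ``local'' pre-image estimate of Lemma~\ref{lem:postsurj-N} (more precisely its iterate, Corollary~\ref{cor:postsurj-N}) transported from the Cayley graph of $\GG$ to the approximating graph $(V,E)$. The point of the hypothesis $r \geq N+2R$ is exactly that, once we have a pattern $p$ whose image under $f$ agrees with $q$ except on a few scattered points, we can repair $p$ in a controlled neighborhood of each bad point without disturbing the values of the image on the good set $U$, because the correction stays within distance $N$ of the bad point (so within distance $N+R$ of where the rule $f$ is evaluated), and the $r$-neighborhood of every point of $U$ looks exactly like a disk in the Cayley graph, so the rule behaves there just as it does on $\GG$.

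First I would set up the correspondence: since $\GG$ is sofic and we are handed an $(r,\varepsilon)$-approximation, every $u\in U$ has its $r$-ball in $(V,E)$ isomorphic as a labeled graph to $D_{\gen,r}$, so a pattern on a subset of such a ball can be pulled back to a pattern on a subset of the Cayley graph; in particular the relation $p\update{f}q$ makes sense (as the excerpt's extended notation allows) and agrees locally with the corresponding relation over $\GG$. Next I would produce a \emph{first approximation} $p_0 : V \to S$: pick any $u_0 \in U$, realize $q$ restricted to the ball $D_R(u_0)$ — which is a disk — together with (by post-surjectivity applied on $\GG$, via surjectivity of the rule, Proposition~\ref{prop:post-surj}) a local pre-image, and then extend arbitrarily to all of $V$; this gives some $p_0$ with $p_0 \update{f} q$ valid on at least that one point. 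The real work is to enlarge the set of points $v\in U$ on which $p$ correctly maps to $q(v)$, one point at a time. Suppose $p$ already satisfies $f(\restrict{p}{D_R(v)}) = q(v)$ for all $v$ in some subset $U' \subsetneq U$, and let $w\in U\setminus U'$. Because $w$ has an $r$-ball isomorphic to $D_{\gen,r}$ with $r\geq N+2R$, I pull this ball back to $D_{\gen,r}\subseteq\GG$, view $\restrict{p}{D_{\gen,r}}$ as a partial configuration there, complete it (arbitrarily) to a full configuration $\hat e \in S^\GG$ and set $\hat c = F(\hat e)$; then I change $\hat c$ only at the image of $w$ to the value $q(w)$, obtaining $\hat c'$ with $\Delta(\hat c,\hat c') = \{*\}$ a single point; Lemma~\ref{lem:postsurj-N} furnishes a pre-image $\hat e'$ of $\hat c'$ with $\Delta(\hat e,\hat e') \subseteq D_N(*)$. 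Transporting $\hat e'$ back and re-defining $p$ on the corresponding radius-$N$ ball around $w$ in $(V,E)$, I get a new $p$ with $f(\restrict{p}{D_R(w)}) = q(w)$; and since the change is confined to $D_N(w)$, the rule at any other point $v\in U'$ with $d(v,w) > N+R$ is untouched, while for $v\in U'$ with $d(v,w)\le N+R \le r - R$ the ball $D_R(v)$ still lies inside the good region $D_r(w)\cong D_{\gen,r}$ where $p$ now encodes $\hat e'$, and $\hat e'$ is a genuine pre-image of $\hat c'$, so $f(\restrict{p}{D_R(v)}) = \hat c'(v) = \hat c(v) = q(v)$ because $\hat c$ and $\hat c'$ differ only at $w$. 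Hence $U'\cup\{w\}$ is still good.

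Since $U$ is finite, iterating this step over all of $U$ terminates in a pattern $p:V\to S$ with $p\update{f}q$, which is the claim. The main obstacle — and the step I would be most careful about — is the bookkeeping in the inductive step just sketched: one must check that the radius-$N$ repair around $w$, while it may destroy correctness at points of $U'$ that are \emph{too close} to $w$, never does so, and this is precisely where the budget $r\ge N+2R$ is consumed (the $R$ for the rule's radius, plus $N$ for the correction radius, plus another $R$ so that the rule evaluated at a point within $N+R$ of $w$ still reads only cells inside the isomorphic disk $D_r(w)$). A subtlety worth spelling out is that the pull-back of $p$ on $D_{\gen,r}$ to $\GG$ need only be \emph{completed} to a configuration $\hat e$ and we need not worry whether $\hat e$ is asymptotic to anything — Lemma~\ref{lem:postsurj-N} is stated for arbitrary $c,c',e$ with $c=F(e)$ and $\Delta(c,c')=\{1_\GG\}$, so post-surjectivity's ``asymptotic pre-image'' content has already been distilled into the uniform radius $N$, and that is all we use.
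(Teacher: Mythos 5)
Your proposal is correct and follows essentially the same route as the paper: a finite sequence of single-cell repairs, each obtained by transporting Lemma~\ref{lem:postsurj-N} through the $r$-ball isomorphism of the sofic approximation, with the budget $r\geq N+2R$ ensuring each radius-$N$ correction neither leaves the isomorphic disk nor disturbs the image at other points of $U$. The only difference is bookkeeping (you grow a set of ``good'' points of $U$, the paper morphs an initial image pattern $q_0$ into $q$ one cell at a time while maintaining a full pre-image), and you actually spell out the locality details the paper leaves implicit.
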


\begin{proof}
  Take arbitrary $p_0:V\to S$ and $q_0:U\to S$ such that
  $p_0\update{f}q_0$.  Let
  \begin{math}
    q_0, q_1, \ldots, q_m = q
  \end{math}
  be a sequence of patterns with support $U$ such that, for every $i$,
  $q_i$ and $q_{i+1}$ only differ in a single $k_i \in U$.
  Since the $r$-neighborhood of $k_i$ is isomorphic to the disk of the
  same radius from the Cayley graph of $\GG$, we can apply
  Lemma~\ref{lem:postsurj-N} and deduce the existence of a sequence
  \begin{math}
    p_0, p_1, \ldots, p_m
  \end{math}
  with common support $V$ such that each $p_i$ is a pre-image of $q_i$
  and, for every $i$, $p_i$ differs from $p_{i+1}$ at most in
  $D_N(k_i)$. Then $p=p_m$ satisfies the thesis.
\end{proof}
 
The next lemma is an observation made in~\citep{weiss00}.
\begin{lemma}[Packing lemma]
  \label{lem:sofic:packing}
  Let $\GG$ be a group with a finite generating set $\gen$.  Let
  $(V,E)$ be a $\gen$-labeled graph and $U\subseteq V$ a subset with
  \begin{math}
    |U| \geq \frac{1}{2} |V|
  \end{math}
  such that, for every $u\in U$, the $2\ell$-neighborhood of $u$ in
  $(V,E)$ is isomorphic to the disk of radius $2\ell$ in the Cayley
  graph of $\GG$.  Then, there is a set $W \subseteq U$ of size at
  least
  \begin{math}
    \frac{|V|}{2|D_{2\ell}|}
  \end{math}
  such that the $\ell$-neighborhoods of the elements of $W$ are disjoint.
\end{lemma}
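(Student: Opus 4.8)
The plan is to run a straightforward greedy packing argument. First I would let $W\subseteq U$ be a \emph{maximal} subset with the property that the $\ell$-neighborhoods $D_\ell(w)$, $w\in W$, are pairwise disjoint; such a $W$ exists because $V$ is finite — one obtains it, for instance, by scanning through $U$ and adding each point whose $\ell$-ball is disjoint from all $\ell$-balls already chosen, or equivalently by choosing $W$ of maximum cardinality among subsets of $U$ with this disjointness property. Note that for $u\in U$ the $\ell$-neighborhood of $u$ is, by hypothesis, a genuine disk (being contained in the $2\ell$-neighborhood, which is isomorphic to $D_{2\ell}$), so talking about disjointness of these $\ell$-balls is meaningful.

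The first step is to show that the $2\ell$-neighborhoods of the elements of $W$ cover all of $U$. Fix $u\in U$. If $u\in W$, then $u\in D_{2\ell}(u)$ trivially. If $u\notin W$, then maximality of $W$ forces $W\cup\{u\}$ to violate the disjointness property; since the $\ell$-balls over $W$ alone were pairwise disjoint, the violation must involve $u$, so $D_\ell(u)\cap D_\ell(w)\neq\emptyset$ for some $w\in W$. Choosing a vertex $z$ in that intersection and using the triangle inequality for the path metric of $(V,E)$, $d(u,w)\le d(u,z)+d(z,w)\le 2\ell$, i.e.\ $u\in D_{2\ell}(w)$. Hence $U\subseteq\bigcup_{w\in W}D_{2\ell}(w)$.

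The second step is to bound the covering balls and count. For each $w\in W\subseteq U$, the $2\ell$-neighborhood of $w$ in $(V,E)$ is isomorphic, as a labeled graph, to the disk $D_{2\ell}$ in the Cayley graph of $\GG$, so its vertex set $D_{2\ell}(w)$ has exactly $|D_{2\ell}|$ elements. Combining this with the covering from the first step,
\[
  |U| \;\le\; \sum_{w\in W} |D_{2\ell}(w)| \;=\; |W|\cdot|D_{2\ell}| ,
\]
whence $|W|\ge |U|/|D_{2\ell}|\ge |V|/\bigl(2|D_{2\ell}|\bigr)$ using the hypothesis $|U|\ge\tfrac{1}{2}|V|$. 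This is exactly the claimed bound, and the $\ell$-balls over $W$ are disjoint by construction.

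As for the main obstacle: there really is not much of one — the argument is elementary counting. The only point that requires a moment of care is that the single hypothesis (local isomorphism of $2\ell$-neighborhoods of points of $U$ with the Cayley disk $D_{2\ell}$) is being used for two distinct purposes at once: it guarantees that the $\ell$-neighborhoods involved behave like honest disks, so that ``disjointness'' is the notion one wants, and it pins down the size $|D_{2\ell}(w)|=|D_{2\ell}|$ needed in the final count. No compactness, amenability, or deeper group-theoretic input is needed.
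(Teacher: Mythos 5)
Your proposal is correct and follows essentially the same argument as the paper: take a maximal $W\subseteq U$ with pairwise disjoint $\ell$-neighborhoods, observe that maximality forces $U\subseteq\bigcup_{w\in W}D_{2\ell}(w)$, and count using $|D_{2\ell}(w)|=|D_{2\ell}|$ to get $|U|\le|D_{2\ell}|\cdot|W|$, hence $|W|\ge|V|/(2|D_{2\ell}|)$. The extra care you take in spelling out the triangle inequality and the use of the local isomorphism hypothesis is fine but does not change the route.
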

\begin{proof}
  Let $W\subseteq U$ be a maximal set such that the
  $\ell$-neighborhoods of the elements of $W$ are disjoint.
  Then, for every $u\in U$, the neighborhood $D_{\ell}(u)$ must
  intersect the set $\bigcup_{w\in W}D_\ell(w)$.
  Therefore, $U\subseteq D_{2\ell}(W)$, which gives
  \begin{math}
    |U| \leq |D_{2\ell}| \cdot |W| .
  \end{math}
\end{proof}

\begin{proof}[of Theorem~\ref{thm:sofic-post-pre}]
  Let $\GG$ be a sofic group and assume that
  \begin{math}
    \Acal = \langle S, D_R, f \rangle
  \end{math}
  is a cellular automaton on $\GG$ that is post-surjective, but not
  pre-injective. For brevity, set $|S| = s \geq 2$. Let $N$ be as in
  Lemma~\ref{lem:postsurj-N}.

  Since the \ca\ is not pre-injective, there are two asymptotic
  configurations
  \begin{math}
    x, x' : \GG \to S
  \end{math}
  such that
  \begin{math}
    F_\Acal(x) = F_\Acal(x') .
  \end{math}
  Take $m$ such that the disk $D_m$ contains $\Delta(x,x')$. It
  follows that there are two mutually erasable patterns on $D_{m+2R}$,
  that is, two patterns
  \begin{math}
    p, p' : D_{m+2R} \to S
  \end{math}
  such that on any configuration~$z$, replacing an occurrence of $p$
  with $p'$ or vice versa does not change the image of~$z$ under
  $F_\Acal$.

  Take
  \begin{math}
    r \geq \max \{N, m\} + 2R
  \end{math}
  and $\varepsilon>0$ small. We shall need $\varepsilon$ small enough
  so that
  \begin{displaymath}
    s^{\varepsilon} \cdot \left( 1 - s^{-|D_r|} \right)^{\frac{1}{2|D_{2r}|}}
    < 1 \,.
  \end{displaymath}
  Such a choice is possible, because the second factor on the left-hand
  side is a constant smaller than $1$.
  Since $\GG$ is sofic, there is a $(2r,\varepsilon)$-approximation of
  $\GG$ given by a graph $(V,E)$ and a set $U\subseteq V$.
  Let
  \begin{math}
    \varphi : S^V \to S^U
  \end{math}
  be the map given by $\varphi(p)=q$ if $p\update{f}q$. Such $\varphi$
  is well defined, because the $R$-neighborhood of each $u\in U$ is
  isomorphic to the disk of radius $R$ in $\GG$.

  By Lemma~\ref{lem:postsurj-sofic}, the map $\varphi$ is surjective, hence
  \begin{equation} \label{eq:phi-surjective}
    |\varphi(S^V)| = s^{|U|} \,.
  \end{equation}
  On the other hand, by Lemma~\ref{lem:sofic:packing}, there is a collection
  $W\subseteq U$ of
  \begin{math}
    |W| \geq \frac{|V|}{2|D_{2r}|}
  \end{math}
  points in $U$ whose $r$-neighborhoods are disjoint. Each of these
  $r$-neighborhoods is isomorphic to the disk
  \begin{math}
    D_r \supseteq D_{m+2R}
  \end{math}
  in $\GG$. The existence of the mutually erasable patterns on $D_r$
  thus implies that there are at most
  \begin{displaymath}
    |\varphi(S^V)| \leq (s^{|D_r|}-1)^{|W|} \cdot s^{|V|-|W|\cdot|D_r|}
  \end{displaymath}
  patterns on $V$ with distinct images. However,
  \begin{eqnarray*}
    (s^{|D_r|}-1)^{|W|} \cdot s^{|V|-|W|\cdot|D_r|}
    & = &
    \left( 1 - s^{-|D_r|} \right)^{|W|} \cdot s^{|V|}
    \\ & \leq &
    \left( 1 - s^{-|D_r|} \right)^{\frac{|V|}{2|D_{2r}|}} \cdot s^{|V|}
    \\ & < &
    s^{-\varepsilon|V|} \cdot s^{|V|}
    \\ & = &
    s^{(1-\varepsilon)|V|}
    \\ & < &
    s^{|U|}
    \,,
  \end{eqnarray*}
  which contradicts (\ref{eq:phi-surjective}).
\end{proof}

\begin{corollary}
  \label{cor:sofic-postsurj}
  Let $\GG$ be a sofic group and $\Acal$ a cellular automaton on
  $\GG$. Then, $\Acal$ is post-surjective if and only if it is
  reversible.
\end{corollary}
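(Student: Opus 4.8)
The plan is to simply combine the three ingredients already assembled in the excerpt: Example~\ref{ex:revers-postsurj}, which shows every reversible \ca\ is post-surjective (over any group, in particular a sofic one); Theorem~\ref{thm:sofic-post-pre}, which shows that on a sofic group post-surjectivity forces pre-injectivity; and Theorem~\ref{thm:pre-post-rev}, which turns a pre-injective \emph{and} post-surjective \ca\ on the full shift into a reversible one. So the statement is just the conjunction of one easy direction and one chain of two implications.

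First I would dispatch the ``if'' direction: assuming $\Acal$ reversible, Example~\ref{ex:revers-postsurj} gives post-surjectivity directly, with no use of soficness. This is the cheap half.

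For the ``only if'' direction, suppose $\Acal$ is post-surjective on the sofic group $\GG$. By Theorem~\ref{thm:sofic-post-pre}, $\Acal$ is then pre-injective. Hence $\Acal$ is both pre-injective and post-surjective, and since we are working on the full shift $S^\GG$, Theorem~\ref{thm:pre-post-rev} applies and yields that $\Acal$ is reversible. Combining the two directions completes the proof.

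There is essentially no obstacle here beyond making sure the hypotheses line up: Theorem~\ref{thm:sofic-post-pre} is stated for \ca\ on $\GG$ (full shift), and Theorem~\ref{thm:pre-post-rev} likewise concerns the full shift, so both apply verbatim; soficness is used only to pass from post-surjectivity to pre-injectivity. The one point worth a sentence is that finitely generated is not assumed in the corollary, but Theorem~\ref{thm:pre-post-rev} already handles the reduction to the finitely generated case via Proposition~\ref{prop:postsurj-restrict}, so nothing extra is needed.
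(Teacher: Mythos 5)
Your proposal is correct and matches exactly how the paper obtains this corollary: reversibility gives post-surjectivity by Example~\ref{ex:revers-postsurj}, while post-surjectivity on a sofic group gives pre-injectivity by Theorem~\ref{thm:sofic-post-pre} and hence reversibility by Theorem~\ref{thm:pre-post-rev}. The paper leaves the corollary as this immediate combination, so there is nothing to add.
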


Do post-surjective cellular automata on full shifts which are not
pre-injective exist at all? By Theorem \ref{thm:sofic-post-pre}, such
examples might exist only if non-sofic groups exist. We thus make the
following ``almost dual'' to Gottschalk's conjecture:
\begin{conjecture}
  \label{conj: sofic-postsurj}
  Let $\GG$ be a group and $\Acal$ a cellular automaton on $\GG$. If
  $\Acal$ is post-surjective, then it is pre-injective.
\end{conjecture}

\section{Balancedness}

\begin{definition}
  \label{def:balance}
  Let $\GG$ be a group and let $E\in\PF(\GG)$.  A cellular automaton
  \begin{math}
    \Acal = \langle S, \Neigh, f \rangle
  \end{math}
  on a group $\GG$ is \emph{$E$-balanced} if for every $M\in\PF(\GG)$
  such that
  \begin{math}
    E\Neigh \subseteq M ,
  \end{math}
  every pattern $p:E\to{S}$ has
  \begin{math}
    |S|^{|M|-|E|}
  \end{math}
  pre-images on $M$. $\Acal$ is \emph{balanced} if it is $E$-balanced
  for every $E\in\PF(\GG)$.
\end{definition}

If $\GG$ is finitely generated, and $r\geq0$ is such that
\begin{math}
  \Neigh \subseteq D_r ,
\end{math}
it is easy to see that Definition~\ref{def:balance} is equivalent to
the following property: for every $n\geq0$ every pattern on $D_n$
has exactly
\begin{math}
  |S|^{|D_{n+r}| - |D_n|}
\end{math}
pre-images on $D_{n+r}$. In addition (see~\citep[Remark 18]{cgk13})
balancedness is preserved by both induction and restriction: hence, it
can be determined by only checking it on the subgroup generated by the
neighborhood. Balancedness does not depend on the choice of the
neighborhood, because it is equivalent to preservation by the
\ca\ global function of the \emph{uniform product measure} on $S^\GG$
(see~\cite[Proposition 17]{cgk13}). Finally, as $|S|^{|M|-|E|}\geq1$
when $E\Neigh\subseteq{M}$, every balanced \ca\ is surjective by the
orphan pattern principle.

The notion of balancedness given in Definition~\ref{def:balance} is
meaningful for \ca\ on the full shift, but not for \ca\ on proper
subshifts. The reason is that, with proper subshifts, it may happen
that the number of patterns on a given set is not a divisor of the
number of patterns on a larger set.

\begin{example}
  \label{ex:golden-mean}
  Let
  \begin{math}
    X \subseteq \deux^\Zset
  \end{math}
  be the \emph{golden mean shift} of all and only bi-infinite
  words where the factor $11$ does not appear. It is easy to see
  (see~\citep[Example 4.1.4]{lm95}) that
  \begin{math}
    |\Lang_X \cap \deux^n| = f_{n+2} ,
  \end{math}
  where $f_n$ is the $n$th Fibonacci number. Any two consecutive
  Fibonacci numbers are relatively prime.
  \hfill\exampleqed
\end{example}

\begin{lemma}
  Let $\GG$ be a group, let $S$ be a finite set, and let
  \begin{math}
    F, H : S^\GG \to S^\GG
  \end{math}
  be \ca\ global transition functions.
  \begin{enumerate}
  \item
    If $F$ and $H$ are both balanced, then so is $F\circ{H}$.
  \item
    If $F$ and $F\circ{H}$ are both balanced, then so is $H$.
  \item
    If $H$ and $F\circ{H}$ are both balanced, and in addition $H$ is
    reversible, then $F$ is balanced.
  \end{enumerate}
\end{lemma}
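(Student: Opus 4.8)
The plan is to combine the standard reformulation of balancedness in measure-theoretic language with one elementary counting identity for composition. First recall that a \ca\ $G=\langle S,\Neigh_G,f_G\rangle$ is balanced if and only if its global map preserves the uniform Bernoulli measure $\mu$ on $S^\GG$: for any finite $M\supseteq E\Neigh_G$ the clopen set $G^{-1}([p]_E)$ is the disjoint union of the cylinders $[q]_M$ indexed by the pre-images $q$ of $p$ on $M$, so $\mu\bigl(G^{-1}([p]_E)\bigr)=|S|^{-|M|}\cdot\#\{\text{pre-images of }p\text{ on }M\}$, and this equals $\mu([p]_E)=|S|^{-|E|}$ for every cylinder exactly when $G$ is balanced; since cylinders form a generating $\pi$-system and $\mu$ is determined by its values on them, balancedness is equivalent to $G_*\mu=\mu$. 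By the remark preceding the lemma, balancedness is insensitive to restriction to the subgroup generated by the neighborhoods, so we may assume $\GG$ finitely generated, fix a finite generating set, and fix radii $a,b$ with $\Neigh_F\subseteq D_a$ and $\Neigh_H\subseteq D_b$ (hence $\Neigh_{F\circ H}\subseteq D_aD_b\subseteq D_{a+b}$). The key identity is: for every pattern $p:D_n\to S$, with $q$ ranging over patterns on $D_{n+a+b}$,
\begin{equation*}
  \#\{\,q : q\update{f_{F\circ H}}p\,\} \;=\; \sum_{\substack{\tilde p:D_{n+a}\to S\\ \tilde p\update{f_F}p}}\#\{\,q : q\update{f_H}\tilde p\,\}, \tag{$\star$}
\end{equation*}
because $q\update{f_{F\circ H}}p$ holds precisely when, setting $\tilde p:=H(q)|_{D_{n+a}}$, one has $q\update{f_H}\tilde p$ together with $\tilde p\update{f_F}p$ (the value $F(\tilde p)|_{D_n}$ depends only on $\tilde p$ restricted to $D_n\Neigh_F\subseteq D_{n+a}$), and distinct $\tilde p$ give disjoint sets of such $q$.

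Parts (1) and (3) then fall out immediately. For (1): $(F\circ H)_*\mu=F_*(H_*\mu)=F_*\mu=\mu$; equivalently, in $(\star)$ each summand equals $|S|^{|D_{n+a+b}|-|D_{n+a}|}$ by balancedness of $H$ and there are $|S|^{|D_{n+a}|-|D_n|}$ summands by balancedness of $F$, giving the balanced count $|S|^{|D_{n+a+b}|-|D_n|}$ for $F\circ H$. For (3): since $H$ is reversible, $H^{-1}$ is a \ca, and from $H_*\mu=\mu$ we get $(H^{-1})_*\mu=(H^{-1})_*\bigl(H_*\mu\bigr)=(H^{-1}\circ H)_*\mu=\mu$, so $H^{-1}$ is balanced; as $F=(F\circ H)\circ H^{-1}$, the computation $F_*\mu=(F\circ H)_*\bigl((H^{-1})_*\mu\bigr)=(F\circ H)_*\mu=\mu$ (or part (1) applied to $F\circ H$ and $H^{-1}$) shows $F$ balanced. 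One may also read this off $(\star)$: the map $q\mapsto H(q)|_{D_{n+a}}$ sends the $(F\circ H)$-pre-images of $p$ \emph{onto} the $F$-pre-images of $p$ on $D_{n+a}$ (onto because, by balancedness of $H$, each such $\tilde p$ has $|S|^{|D_{n+a+b}|-|D_{n+a}|}\geq1$ pre-images), with all fibres of that common size, which forces the $F$-pre-image count to the balanced value---notably using only balancedness of $H$, not its reversibility.

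Part (2)---$F$ and $F\circ H$ balanced, want $H$ balanced---is the real obstacle. From $(\star)$, with left side $|S|^{|D_{n+a+b}|-|D_n|}$ and index set of size $|S|^{|D_{n+a}|-|D_n|}$, one obtains only that the numbers $\#\{q:q\update{f_H}\tilde p\}$ \emph{average} to the balanced value $|S|^{|D_{n+a+b}|-|D_{n+a}|}$ over each ``$F$-fibre'' $\{\tilde p:\tilde p\update{f_F}p\}$---and, as $p$ varies over patterns on $D_n$, these fibres partition all patterns on $D_{n+a}$. Upgrading this on-average statement to the termwise one is exactly the difficulty; it amounts to showing that $F_*$ is injective at $\mu$, i.e.\ that a shift-invariant probability measure $\nu$ with $F_*\nu=\mu$ must equal $\mu$ (apply this to $\nu=H_*\mu$, using $F_*(H_*\mu)=(F\circ H)_*\mu=\mu=F_*\mu$). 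Here a global argument is needed: $F$ is a measure-preserving factor map of the full shift onto itself (onto, because balanced implies surjective), so $h(\nu)\geq h(F_*\nu)=h(\mu)$, while $h(\nu)\leq h(\mu)$ since $\mu$ is the measure of maximal entropy of the full shift; hence $h(\nu)=h(\mu)$, and by uniqueness of the maximal measure $\nu=\mu$. This is clean for amenable $\GG$ (in particular for the $d$-dimensional case); for a general group one substitutes the appropriate entropy theory, or else a finer analysis of $(\star)$ across all disk radii. I expect part (2) to be the crux: parts (1) and (3) are pure bookkeeping, whereas part (2) says precisely that the balanced map $F$ cannot collapse a non-uniform shift-invariant measure onto the uniform one, and that cannot be seen from the local counting identity alone.
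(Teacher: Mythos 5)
Parts (1) and (3) of your proposal are correct and essentially coincide with the paper's treatment: (1) is the same exact-counting (equivalently, measure-preservation) computation, and for (3) you derive that $H^{-1}$ is balanced and compose, just as the paper does (the paper gets ``$H^{-1}$ balanced'' by applying its part (2) to the pair $(H,H^{-1})$ with composite the identity, you get it directly from $H_*\mu=\mu$; both are fine). Your further remark that (3) needs only balancedness of $H$, not reversibility, is also correct --- and, as explained next, that counting aside is in fact the very argument the paper uses.

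The problem is part (2), and it is an issue of orientation of the composition. The paper's proof of (2) is elementary: if $H$ is unbalanced, some $p:D_n\to S$ has $M>|S|^{|D_{n+r}|-|D_n|}$ pre-images \emph{according to $H$} on $D_{n+r}$ (one may take the excess side, since the counts over all $p$ on $D_n$ sum to $|S|^{|D_{n+r}|}$); each of these $M$ intermediates then has \emph{exactly} $|S|^{|D_{n+2r}|-|D_{n+r}|}$ pre-images according to the balanced $F$, and these sets are disjoint and exhaust the composite pre-images, so the composite count is $M\cdot|S|^{|D_{n+2r}|-|D_{n+r}|}$, exceeding the balanced value. This works precisely because the balanced hypothesis sits on the factor applied to the intermediates: in your notation it proves ``inner and composite balanced $\Rightarrow$ outer balanced'', which is word for word the aside you attached to part (3). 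Reading $F\circ H$ with the standard convention, as you did, turns (2) into ``outer and composite balanced $\Rightarrow$ inner balanced''; there, as you rightly observe, the identity $(\star)$ only gives the balanced value \emph{on average} over each $F$-fibre, and upgrading it is the rigidity statement $F_*\nu=\mu\Rightarrow\nu=\mu$. Your entropy/unique-maximal-measure argument settles that only for amenable $\GG$, so under your reading part (2) is genuinely not proved for arbitrary groups --- but that stronger statement is neither what the paper's proof establishes (its proof tacitly takes pre-images under $H$ first and under $F$ second) nor what the paper later needs: the only subsequent use of (2) is with composite equal to the identity, where the two readings coincide, to conclude that $H^{-1}$ is balanced. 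So your material does contain the paper's actual argument, but misassigned: the elementary counting you placed under (3) is the paper's proof of (2), while the entropy detour you offer for (2) addresses a different, harder claim and is incomplete beyond the amenable case; it is worth flagging that, under the standard composition convention, the paper's own wording of (2) versus its proof exhibits this same swap of roles.
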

In particular, a reversible \ca\ and its inverse are either both
balanced or both unbalanced.

\begin{proof}
  It is sufficient to consider the case when $\GG$ is finitely
  generated, \eg, by the union of the neighborhoods of the two \ca.
  Let $r\geq0$ be large enough that the disk $D_r$ includes the
  neighborhoods of both $F$ and~$H$.

  First, suppose $F$ and $H$ are both balanced. Let $p:D_n\to{S}$  By
  balancedness, $p$ has exactly
  \begin{math}
    |S|^{|D_{n+r}| - |D_n|}
  \end{math}
  pre-images over $D_{n+r}$ according to $H$. In turn, every such
  pre-image has
  \begin{math}
    |S|^{|D_{n+2r}| - |D_{n+r}|}
  \end{math}
  pre-images over $D_{n+2r}$ according to $F$, again by
  balancedness. All the pre-images of $p$ on $D_{n+2r}$ by $F\circ{H}$
  have this form, so $p$ has
  \begin{math}
    |S|^{|D_{n+2r}| - |D_{n}|}
  \end{math}
  pre-images on $D_{n+2r}$ according to $F\circ{H}$.  This holds for
  every $n\geq0$ and $p:D_n\to{S}$, thus, $F\circ{H}$ is balanced.

  Now, suppose $F$ is balanced but $H$ is not. Take $n\geq0$ and
  \begin{math}
    p : D_n \to S
  \end{math}
  having
  \begin{math}
    M > |S|^{|D_{n+r}| - |D_{n}|}
  \end{math}
  pre-images according to $H$. By balancedness of $F$, each of these
  $M$ pre-images has exactly
  \begin{math}
    |S|^{|D_{n+2r}| - |D_{n+r}|}
  \end{math}
  pre-images according to $F$. Then $p$ has overall
  \begin{math}
    M \cdot |S|^{|D_{n+2r}| - |D_{n+r}|} > |S|^{|D_{n+2r}| - |D_n|}
  \end{math}
  pre-images on $D_{n+2r}$ according to $F\circ{H}$, which is thus not
  balanced.

  Finally, suppose $H$ and $F\circ{H}$ are balanced and $H$ is
  reversible. As the identity \ca\ is clearly balanced, by the
  previous point (with $H$ taking the role of $F$ and $H^{-1}$ that of
  $H$) $H^{-1}$ is balanced. By the first point, as $F\circ{H}$ and
  $H^{-1}$ are both balanced, so is their composition
  \begin{math}
    F = F \circ H \circ H^{-1} .
  \end{math}
\end{proof}

As we observed after Definition~\ref{def:balance}, a balanced
\ca\ gives at least one pre-image to each pattern, thus is
surjective. On amenable groups (see~\citep{b10}) the converse is also
true; on non-amenable groups (ibid.\@) some surjective cellular
automata are not balanced. In the last section of~\citep{cgk13}, we
ask ourselves the question whether \emph{injective} cellular automata
are balanced. The answer is that, at least in all cases currently
known, it is so.

\begin{theorem}
  \label{thm:rev-bal}
  Reversible CA are balanced.
\end{theorem}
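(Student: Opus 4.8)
The plan is to reduce the statement to a counting argument over large disks, exploiting the fact that a reversible CA has an inverse CA with a bounded neighborhood. Let $\Acal$ be a reversible CA on a group $\GG$ with global transition function $F$, and let $G$ be the global transition function of its inverse, so $G = F^{-1}$. By the remark following Definition~\ref{def:balance}, balancedness is preserved by restriction, so it suffices to treat the case where $\GG$ is finitely generated; fix a finite generating set $\gen$. Pick $r \geq 0$ large enough that the neighborhoods of both $F$ and $G$ are contained in $D_r$. The goal is to show that for every $n \geq 0$, every pattern $p : D_n \to S$ has exactly $|S|^{|D_{n+r}| - |D_n|}$ pre-images on $D_{n+r}$ under $F$.

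First I would set up the key dichotomy. Fix $n$ and consider a pattern $q : D_{n+r} \to S$. Since the neighborhood of $F$ lies in $D_r$, the image pattern $F(q)$ is well defined on $D_n$, giving a map $\Phi : S^{D_{n+r}} \to S^{D_n}$, $\Phi(q) = $ the restriction of $F$ applied to $q$; the number of pre-images of $p$ on $D_{n+r}$ under $F$ is exactly $|\Phi^{-1}(p)|$. The naive obstruction is that $\Phi$ need not be surjective and its fibers need not all have the same size — that is precisely what balancedness asserts and what must be proved. The idea is to use $G$ to produce, for each pattern $p$ on $D_n$, enough distinct pre-images, and then bound the total number of patterns on $D_{n+r}$ to force equality. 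Concretely: apply $G$'s local rule to $p$ is not directly possible (the domain is too small), so instead I would work with a two-scale comparison. For a large $m \gg r$, count pairs by composing: a pattern on $D_{n+2r}$ maps under $F$ to a pattern on $D_{n+r}$, which maps under $F$ again (if we go up another layer) — but cleaner is to use the inverse. Since $G \circ F = \mathrm{id}$, for a pattern $u : D_{n+2r} \to S$ the pattern $F(u)$ on $D_{n+r}$ determines $u$ on $D_n$ (apply $G$, whose neighborhood is in $D_r$). Hence the map $u \mapsto F(u)$ from $S^{D_{n+2r}}$ to $S^{D_{n+r}}$ is \emph{injective on the restriction to $D_n$} in the sense that $F(u)$ recovers $\restrict{u}{D_n}$; equivalently, the composite $S^{D_{n+2r}} \to S^{D_{n+r}} \to S^{D_n}$ (second arrow $= G$) is just restriction.

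From here the counting goes as follows. By surjectivity of $F$ (reversible $\Rightarrow$ bijective $\Rightarrow$ surjective, so in particular the orphan pattern principle gives no orphans), every pattern $p : D_n \to S$ admits at least one pre-image on $D_{n+r}$; call $a_p \geq 1$ their number. Summing, $\sum_{p : D_n \to S} a_p = |S|^{|D_{n+r}|}$, since every pattern on $D_{n+r}$ is a pre-image of its own image. To pin each $a_p$ to $|S|^{|D_{n+r}|-|D_n|}$, I would show the \emph{reverse} inequality $a_p \leq |S|^{|D_{n+r}|-|D_n|}$ for every $p$, which combined with the sum forces equality everywhere (any slack in one term would have to be compensated by a deficit in another, contradicting the lower bound $a_q \geq 1$ — wait, that is not quite enough; I need a genuine upper bound). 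The clean way: for a fixed $p : D_n \to S$, a pre-image $q : D_{n+r} \to S$ of $p$ under $F$ is, via $G$, completely determined \emph{on $D_{n-r}$... }no — via $G$ with neighborhood in $D_r$ we get that $\restrict{q}{D_{n}}$... this is circular. The correct move is to go \emph{down}: extend $q$ to a full configuration, apply $G$; but $q$'s extension is not unique.

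The main obstacle — and the crux of the argument — is exactly this: turning the bounded-neighborhood inverse into a \emph{local} injectivity statement at the level of finite patterns on nested disks, so that the fiber sizes of $\Phi$ can be bounded above by $|S|^{|D_{n+r}|-|D_n|}$. I expect the resolution to run as follows: take $p : D_n \to S$ and consider all its pre-images on the \emph{larger} disk $D_{n+r}$; each such pre-image $q$, together with \emph{any} choice of extension on $D_{n+2r} \setminus D_{n+r}$, yields (applying $G$, whose neighborhood is within $D_r$, hence well-defined on $D_{n+r}$) a pattern on $D_{n+r}$ whose restriction to $D_n$ is forced — but actually the cleanest formulation uses that $F$ and $G$ are mutually inverse \emph{global} maps: the map $F : S^{\GG} \to S^{\GG}$ is a bijection, and translation-invariance plus continuity mean the pre-image of a cylinder is a cylinder of controlled radius. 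So I would argue: the cylinder $[p]_{D_n} \subseteq S^{\GG}$ has measure $|S|^{-|D_n|}$ under the uniform (Bernoulli) measure $\mu$; its pre-image $F^{-1}([p]_{D_n}) = G([p]_{D_n})$ is, since $G$ has neighborhood in $D_r$, a union of cylinders on $D_{n+r}$, one for each pre-image pattern $q$, i.e. $F^{-1}([p]_{D_n}) = \bigsqcup_{q} [q]_{D_{n+r}}$ where $q$ ranges over the $a_p$ pre-images of $p$ on $D_{n+r}$ (disjointness because the $q$'s are distinct patterns on the same support). The reverse inclusion — that every $q$ extending $p$-pre-image actually arises — needs the bounded neighborhood of $F$: if $c \in [q]_{D_{n+r}}$ then $F(c)$ agrees with $p$ on $D_n$ since $F$'s neighborhood is in $D_r$. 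Thus $F^{-1}([p]_{D_n}) = \bigsqcup_q [q]_{D_{n+r}}$ exactly, so
\[
|S|^{-|D_n|} \;=\; \mu([p]_{D_n}) \;=\; \mu\bigl(F^{-1}([p]_{D_n})\bigr) \;=\; \sum_q \mu([q]_{D_{n+r}}) \;=\; a_p \cdot |S|^{-|D_{n+r}|},
\]
provided we know $\mu$ is $F$-invariant. But that is what we are trying to prove! So instead I would run the implication the other way: the identity $F^{-1}([p]_{D_n}) = \bigsqcup_q [q]_{D_{n+r}}$ holds \emph{unconditionally} (it only uses that $F$ and $G$ have neighborhoods in $D_r$ and are mutually inverse), and it gives $F^{-1}([p]_{D_n})$ explicitly as a disjoint union of $a_p$ cylinders on $D_{n+r}$, hence of $\mu$-measure $a_p |S|^{-|D_{n+r}|}$. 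Now summing over all $p : D_n \to S$: the sets $[p]_{D_n}$ partition $S^{\GG}$, so their pre-images partition $S^{\GG}$ too, giving $\sum_p a_p |S|^{-|D_{n+r}|} = 1$, i.e. $\sum_p a_p = |S|^{|D_{n+r}|}$ — which we already knew and does not yet give $a_p$ individually. The final ingredient: run the \emph{same} computation with the roles of $F$ and $G$ exchanged at one level up. Consider $b_q$, the number of pre-images of $q : D_{n+r} \to S$ under $F$ on $D_{n+2r}$; composing with $G$ we get that a pre-image under $F$ on $D_{n+2r}$ of $q$ restricts on $D_n$ to the unique pre-image-data, and counting pre-images of $p$ on $D_{n+2r}$ via the two-step factorization $S^{D_{n+2r}} \xrightarrow{F} S^{D_{n+r}} \xrightarrow{F} S^{D_n}$ forces $a_p \cdot (\text{avg } b_q) = (\text{pre-images of } p \text{ on } D_{n+2r})$; by reversibility applied symmetrically one shows each $b_q = |S|^{|D_{n+2r}|-|D_{n+r}|}$ and $a_p$ pins down. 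I would phrase the clean version using $G$-invariance-free cylinder bookkeeping: since $G$ is itself a CA with neighborhood in $D_r$, the forward map $q \mapsto G(q)$ sends $S^{D_{n+2r}} \to S^{D_{n+r}}$, and because $G = F^{-1}$ this map composed with $F$ is the restriction $S^{D_{n+2r}} \to S^{D_n}$... and the honest completion is the symmetric double count, which I expect to be the routine-but-delicate heart of the proof. I will carry it out by fixing $n$, using the disjoint-union identity above both for $(F,p,D_n,D_{n+r})$ and for $(G, \text{any } q, D_n, D_{n+r})$, and cross-multiplying the two exact covers to isolate $a_p = |S|^{|D_{n+r}|-|D_n|}$.
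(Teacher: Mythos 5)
There is a genuine gap, and you in fact put your finger on it yourself without closing it. Your setup is fine: the local maps between pattern spaces on nested disks, the identity $F^{-1}([p]_{D_n})=\bigsqcup_q [q]_{D_{n+r}}$, the fact that the composite $S^{D_{n+2r}}\to S^{D_{n+r}}\to S^{D_n}$ (apply $F$ locally, then $G=F^{-1}$ locally) is the restriction map, and the total count $\sum_p a_p=|S|^{|D_{n+r}|}$. But all of this is bookkeeping that holds for free; the theorem is exactly the statement that the fibers $a_p$ are equal (equivalently, bounded above by $|S|^{|D_{n+r}|-|D_n|}$), and none of your proposed completions supplies a mechanism for that. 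The measure computation needs $F$-invariance of the uniform measure, which, as you note, is equivalent to balancedness, hence circular. The ``symmetric double count'' reduces to the claim that each $b_q$ (the number of pre-images of a pattern on $D_{n+r}$ over $D_{n+2r}$, for $F$ or for its inverse) equals $|S|^{|D_{n+2r}|-|D_{n+r}|}$ --- but that is precisely the theorem one scale up (and the paper's own lemma shows a reversible CA and its inverse are balanced or unbalanced together, so passing to $G$ does not help). The final ``cross-multiplying the two exact covers'' is left unspecified, and the two composite-equals-restriction identities alone do not isolate $a_p$: they only say that the $a_p$ fibers, weighted by next-scale fiber sizes, sum to $|S|^{|D_{n+2r}|-|D_n|}$, which is compatible with unequal $a_p$ unless one already controls the next-scale fibers. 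Note also that a naive pigeonhole upper bound fails: two pre-images of $p$ on $D_{n+r}$ agreeing on the annulus $D_{n+r}\setminus D_n$ do not contradict injectivity, because their images may differ on $D_{n+2r}\setminus D_n$.

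The missing idea --- which is the heart of the paper's proof --- is a cut-and-paste bijection between the pre-image sets of two arbitrary patterns $p_1,p_2:D_n\to S$, obtained by fixing a canonical extension. You considered ``extend $q$ to a full configuration and apply $G$'' and dismissed it because the extension is not unique; the fix is to make it unique: extend a pre-image $q_1$ of $p_1$ by a fixed state $0$ outside $D_{n+r}$, apply $F$ to get $c_1$ (which carries $p_1$ on $D_n$), replace $p_1$ by $p_2$ inside $D_n$ to get $c_2$, apply $F^{-1}$ to get $e_2$, and restrict to $D_{n+r}$. Since $c_1$ and $c_2$ agree outside $D_n$, the configurations $e_1$ and $e_2$ agree outside $D_{n+r}$; as $e_1$ is $0$ there, so is $e_2$, and therefore restricting to $D_{n+r}$ loses nothing and the same construction with $p_1,p_2$ swapped inverts it. This gives $a_{p_1}=a_{p_2}$ for all pairs, and with your count $\sum_p a_p=|S|^{|D_{n+r}|}$ the theorem follows. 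Without some such mechanism your outline does not go through.
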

\begin{proof}
  It is not restrictive to suppose that $\GG$ is finitely generated.
  Let $\Acal$ be a reversible cellular automaton on $\GG$ with state
  set $S$ and global transition function $F = F_\Acal$.  Let $r\geq 0$
  be large enough so that the disk $D_r$ includes the neighborhoods of
  both $F$ and $F^{-1}$.  Then for every $c \in S^\GG$ the states of
  both $F(c)$ and $F^{-1}(c)$ on $D_n$ are determined by the state of
  $c$ in $D_{n+r}$.

  Let $p_1, p_2 : D_n \to S$ be two patterns. It is not restrictive to
  suppose $n \geq r$. We exploit reversibility of $F$ to prove that
  they have the same number of pre-images on $D_{n+r}$ by constructing
  a bijection $T_{1,2}$ between the set of the pre-images of $p_1$ and
  that of the pre-images of $p_2$. As this will hold whatever $n$,
  $p_1$, and $p_2$ are, $F$ will be balanced.

  For $i = 1, 2$ let $Q_i$ be the set of the pre-images of $p_i$ on
  $D_{n+r}$.  Given $q_1 \in Q_1$, and having fixed a state $0 \in S$,
  we proceed as follows:
  \begin{enumerate}
  \item
    \label{it:extend}
    First, we extend $q_1$ to a configuration $e_1$ by setting
    $e_1(g)=0$ for every $g\not\in{D_{n+r}}$.
  \item
    \label{it:direct}
    Then we apply $F$ to $e_1$ and set $c_1=F(e_1)$. By construction,
    \begin{math}
      \restrict{c_1}{D_n} = p_1
    \end{math}.
  \item
    \label{it:cut-paste}
    Next, from $c_1$ we construct $c_2$ by replacing $p_1$ with $p_2$
    inside $D_n$.
  \item
    \label{it:reverse}
    Then we set $e_2=F^{-1}(c_2)$.
  \item
    \label{it:restrict}
    Finally, we call $q_2$ the restriction of $e_2$ to $D_{n+r}$.
  \end{enumerate}
  Observe that
  \begin{math}
    q_2 = \restrict{e_2}{D_{n+r}} \in Q_2 .
  \end{math}
  This follows immediately from $\Acal$ being reversible: by
  construction, if we apply $F$ to $e_2$, and restrict the result to
  $D$, we end up with $p_2$. We call
  \begin{math}
    T_{1,2} : Q_1 \to Q_2
  \end{math}
  the function computed by performing the steps from~\ref{it:extend}
  to~\ref{it:restrict}, and
  \begin{math}
    T_{2,1} : Q_2 \to Q_1
  \end{math}
  the one obtained by the same steps with the roles of $p_1$ and $p_2$
  swapped. The procedure is illustrated in Figure~\ref{fig:rev-bal}.

  Now, by construction, $c_1$ and $c_2$ coincide outside $D_n$, and
  their updates $e_1$ and $e_2$ by $F^{-1}$ coincide outside
  $D_{n+r}$. But $e_1$ is $0$ outside $D_{n+r}$, so that updating
  $c_2$ to $e_2$ is the same as extending $q_2$ with $0$ outside
  $D_{n+r}$. This means that $T_{2,1}$ is the inverse of
  $T_{1,2}$. Consequently, $Q_1$ and $Q_2$ have the same number of
  elements. As $p_1$ and $p_2$ are arbitrary, any two patterns on
  $D_n$ have the same number of pre-images on $D_{n+r}$. As $n \geq 0$
  is also arbitrary, $\Acal$ is balanced.
\end{proof}

\begin{figure}[ht!]
  \centering

  \includegraphics[width=.90\textwidth]{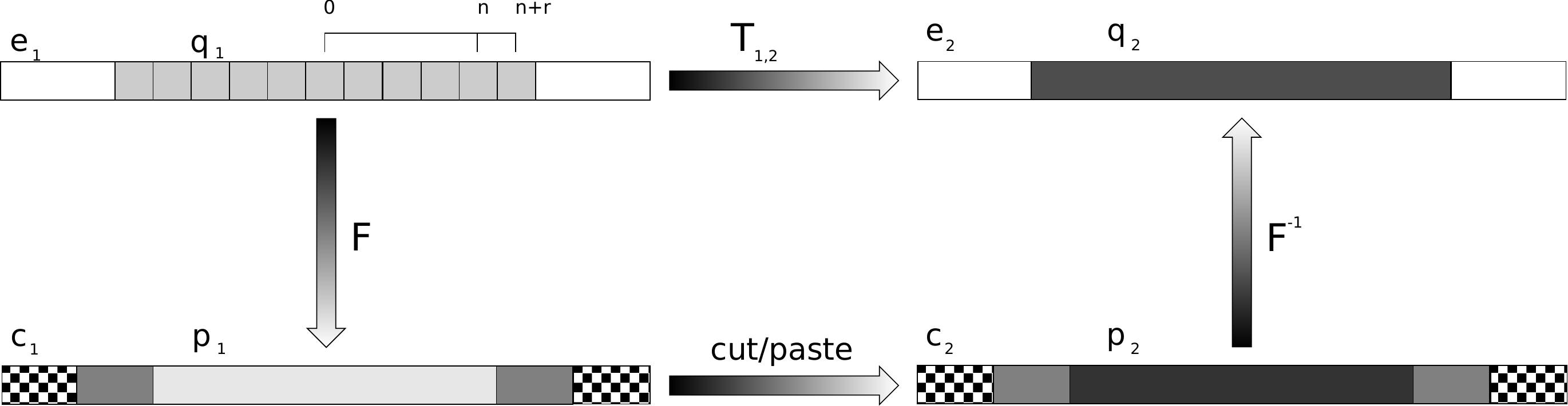}
  \caption{An illustration of the argument for Theorem \ref{thm:rev-bal}.}
  \label{fig:rev-bal}
\end{figure}

\begin{corollary}
  Injective cellular automata over surjunctive groups are balanced.
  In particular, injective \ca\ over sofic groups are balanced.
\end{corollary}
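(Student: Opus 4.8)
The plan is to chain together the definition of surjunctivity, the standard compactness fact about bijective cellular automata, and Theorem~\ref{thm:rev-bal}. Concretely, let $\Acal$ be an injective cellular automaton on the full shift $S^\GG$ over a surjunctive group $\GG$. First I would invoke the definition of surjunctivity directly: since $\GG$ is surjunctive and $\Acal$ is injective on $S^\GG$, the map $F_\Acal$ is surjective. Being both injective and surjective, $F_\Acal$ is a bijection of the compact space $S^\GG$, hence (by the basic fact recalled in the Background section, that the inverse of a bijective \ca\ is again a \ca) the automaton $\Acal$ is reversible.

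Next I would apply Theorem~\ref{thm:rev-bal}, which states that reversible \ca\ are balanced, to conclude that $\Acal$ is balanced. This completes the first assertion.

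For the ``in particular'' clause, I would simply recall the fact stated earlier in Section~\ref{sec:postsurj-sofic} (attributed to \citep[Section 3]{weiss00}) that every sofic group is surjunctive. Hence an injective \ca\ over a sofic group is an injective \ca\ over a surjunctive group, and the first part of the corollary applies verbatim, giving balancedness.

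I do not expect a genuine obstacle here: the statement is a routine corollary, and the only thing to be careful about is making the implication chain explicit, namely $\text{injective} \Rightarrow \text{surjective (surjunctivity)} \Rightarrow \text{bijective} \Rightarrow \text{reversible (compactness)} \Rightarrow \text{balanced (Theorem~\ref{thm:rev-bal})}$. If anything, one might note for completeness that balancedness of \ca\ on subshifts is only meaningful on the full shift (as discussed after Definition~\ref{def:balance}), so the statement is implicitly about \ca\ on $S^\GG$, which is exactly the setting in which surjunctivity is defined.
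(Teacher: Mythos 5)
Your proposal is correct and matches the argument the paper intends (the corollary is stated without proof precisely because it is this chain: injectivity plus surjunctivity gives bijectivity, bijectivity gives reversibility by compactness, and Theorem~\ref{thm:rev-bal} gives balancedness, with soficity reduced to surjunctivity via \citep{weiss00}). No gaps; your remark that the statement concerns \ca\ on the full shift is also consistent with the paper's setting.
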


\begin{corollary}
  Gottschalk's conjecture is equivalent to the statement that every
  injective \ca\ on a full shift is balanced.
\end{corollary}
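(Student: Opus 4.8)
The plan is to read off this equivalence from Theorem~\ref{thm:rev-bal} together with two elementary facts already recorded in the text: a balanced \ca\ is surjective, and a bijective \ca\ is reversible. I would prove both implications group by group — injectivity, surjectivity, reversibility and balancedness are all properties of $F_\Acal$ on the fixed space $S^\GG$ — and then quantify over all groups to obtain the statement in the form of Gottschalk's conjecture.

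For the direction ``every injective \ca\ is balanced'' $\Rightarrow$ ``every injective \ca\ is surjective'': given an injective $\Acal$, the hypothesis makes it balanced, and by Definition~\ref{def:balance} a balanced \ca\ assigns at least one pre-image on $D_{n+r}$ to every pattern on $D_n$; hence it has no orphans, and by the orphan pattern principle it is surjective. This is precisely Gottschalk's conjecture, and this direction uses nothing beyond the definition of balancedness.

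For the converse, assume Gottschalk's conjecture and let $\Acal$ be an injective \ca. Then $\Acal$ is surjective, hence bijective, hence reversible by the compactness argument recalled earlier, and therefore balanced by Theorem~\ref{thm:rev-bal}. I do not expect a genuine obstacle here: all of the content sits inside Theorem~\ref{thm:rev-bal}, and the corollary is bookkeeping built on top of it. The only point worth stating carefully is exactly the one made above — that each implication holds for a single group at a time — since this is what licenses passing between the equivalence of the two universally quantified statements and the per-group assertions.
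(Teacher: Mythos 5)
Your proof is correct and follows essentially the same route as the paper: one direction is injective $\Rightarrow$ surjective $\Rightarrow$ reversible $\Rightarrow$ balanced via Theorem~\ref{thm:rev-bal}, and the other is that a balanced \ca\ has no orphans and hence is surjective (the paper states this as the contrapositive, but it is the same argument). The extra remarks about per-group quantification are fine but not needed beyond what the paper already does implicitly.
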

\begin{proof}
  If Gottschalk's conjecture is true, then every injective \ca\ is
  reversible, thus balanced because of Theorem~\ref{thm:rev-bal}.
  If Gottschalk's conjecture is false, then there exists a \ca\ which
  is injective, but not surjective. Such \ca\ cannot be balanced,
  because balanced \ca\ have no orphans.
\end{proof}

\nocite{*}
\bibliographystyle{abbrvnat}
\bibliography{postsurj-dmtcs}
\label{sec:biblio}

\end{document}